\newcommand{\rref}[1]{\hyperref[#1]{\ref*{#1}}}
\algnewcommand\algorithmicinput{\textbf{INPUT:}}
\algnewcommand\INPUT{\item[\algorithmicinput]}
\algnewcommand\algorithmicoutput{\textbf{OUTPUT:}}
\algnewcommand\OUTPUT{\item[\algorithmicoutput]}
\newtheorem{theorem}{Theorem}
\newtheorem{lemma}[theorem]{Lemma}
\newtheorem{corollary}[theorem]{Corollary}
\theoremstyle{definition}
\newtheorem{definition}[theorem]{Definition}
\newtheorem{example}[theorem]{Example}
\newcommand{\N}{\mathbb{N}}
\newcommand{\E}{\mathbb{E}}
\title{On Exchangeability in Network Models
\author{
      Steffen L. Lauritzen\thanks{Email: {\tt lauritzen@math.ku.dk}}\\
Department of Mathematical Sciences\\
University of Copenhagen\\
\and
Alessandro Rinaldo\thanks{Email: {\tt arinaldo@cmu.edu}}\\
Department of Statistics\\
Carnegie Mellon University\\
\and
Kayvan Sadeghi \thanks{Email: {\tt k.sadeghi@ucl.ac.uk}}\\
Department of Statistical Science\\
University College London\\
}
\date{\today}
}
\begin{document}

\maketitle

\begin{abstract}
We derive representation theorems for
exchangeable distributions on finite and infinite graphs  using elementary arguments based
on geometric and graph-theoretic concepts. Our results elucidate some of the key
differences, and their implications,
between statistical network models that are finitely exchangeable  and models
that define a consistent sequence of probability distributions on graphs of
increasing  size.
 \end{abstract}

\noindent\textbf{Keywords:} deFinetti's theorem; graphons; M\"obius simplex; finite exchangeability; positive semidefinite functions.

\section{Introduction}

  Exchangeability is one of most basic forms of
 probabilistic invariance. When applied to probability distributions on
 graphs, it is equivalent to requiring that isomorphic
 graphs have the same probabilities. 
 Indeed, exchangeability provides the
 probabilistic underpinning to the
 theory of dense graph limits developed recently in the graph-theoretic
 literature \citep[see, e.g.,][]{diaconis:janson:08,borgs_convergent_2008,lov06}.

 In statistical network modeling,  exchangeability is a common
 simplifying assumption. However, it is typically only enforced for models on
  graphs of a given size, and not simultaneously over sequences of models on
 graphs of increasing size. This practice is born out of convenience:  it is
 much
 easier to formulate  probability
 distributions on finite as opposed to infinite graphs. However, the consequences
 of relying on this weaker assumption of
  finite exchangeability can be detrimental to the validity
 and generalizability of any statistical analysis:
 the properties of probability distributions  on graphs of different sizes that
 are finitely exchangeable need not be related to each other in any meaningful
 way (or in any way at all); see \cite{shalizi:rinaldo:13,CD:15,snijders:10}.

 In this article, we investigate the relationship between exchangeability
 of random finite graphs and exchangeability
of random infinite graphs using a combination of  simple  geometric
arguments and standard  graph-theoretic
concepts. Our work can be seen as a extension to the graph setting of the
geometric representation of finite exchangeability for random binary sequences
developed by \cite{diaconis:77}.
We make the following contributions: (1) we
formulate a finite deFinetti's theorem for random graphs that is both elementary and
rely on well known graph-theoretic quantities (namely, density homomorphisms)
only; (2) we extend this result to obtain a simple derivation of the well-known
deFinetti's
representation theorem for exchangeable distributions on (infinite)
graphs; (3) we provide novel geometric characterizations of all the finite
marginals of exchangeable distributions on finite graphs and discuss the
implications of our findings.

\noindent{\bf Related Work.} There is a vast literature on exchangeability of
random arrays, of which random graphs are a special case; see, e.g., \cite{aldous:81,aldous:85}, \cite{eagleson:weber:78}, \cite{hoover:79},
\cite{ker06}, \cite{lauritzen:08}, \cite{kallenberg:05} and \cite{silverman:76}, to name a few. Of
particular
significance is \cite{diaconis:janson:08} \citep[but see also][]{orbantz:roy:15}, which details the connections between
exchangeability of random graphs and the notion of graph limits developed in
\cite{borgs_convergent_2008} and  \cite{lov06} (see also the book \cite{lov12}).
Similarly, finite exchangeability for sequences and arrays has been thoroughly investigated: see
\cite{diaconis:77}, \cite{diaconis:freedman:81}, \cite{aldous:81} and, in particular, \cite{matus:95}; see also
\cite{volfovsky:airoldi:16}. 

 In the companion paper \cite{lauritzen:rinaldo:sadeghi:18}, we rely on tools from the theory of
 graphical models to study the Markov properties of finitely exchangeable network
 models.
The results derived there complement the ones we obtain in the present paper. We
will discuss the connection between the two papers later in  \Cref{sec:manifold}.

The article is organized in the following way. \Cref{sec:geometry} describes the
geometry of finitely exchangeable distributions on finite graphs and
exchangeable distributions on infinite graphs  and introduces the M\"{o}bius
parametrization, which we will use throughout to represent probabilities on
graphs. In \Cref{sec:densities} we provide definitions and basic results for homomorphism and isomorphism densities in order to derive a deFinetti theorem for finitely exchangeable probability distributions on graphs based on the M\"{o}bius parametrization in \Cref{sec:definetti}. In \Cref{sec:manifold} we study the manifold of dissociated and exchangeable random graphs and show that there exists dissociated and exchangeable random graphs that are not infinitely extendable.

\noindent{\bf Notation.}
For any integer $n \geq 2$ let $\mathcal{L}_n$ and
$\mathcal{U}_n$ denote
the set of simple labeled graphs and simple unlabeled graphs with node set
$[n] := \{1,\ldots,n\}$, respectively, and set $\mathcal{L} = \bigcup_{n=2}^\infty
\mathcal{L}_n$  and $\mathcal{U} = \bigcup_{n=2}^\infty \mathcal{U}_n$.
We  let $\mathcal{L}_\infty$ 
be the set of
infinite simple
labeled graphs. 
For any two graphs $G$ and $G'$ in $\mathcal{L}$, we will write $G \sim G'$ to signify
    that they are isomorphic and $[G]$ for the equivalence class of all graphs
    isomorphic to $G$.  With a slight abuse of notation, we will at times
    identify the class $[G]$ with the undirected graph representing it.
We will also identify  $\mathcal{L}_n$ with the Boolean algebra of all subsets of
the node pairs $\left\{ \{i,j\}, i \neq j \right\}$ partially ordered by inclusion by identifying each graph in
$\mathcal{L}_n$ with the binary vector $\{0,1
\}^{ { n\choose 2} }$ representing its edges. With this identification,
$\mathcal{L}_n$ indexes the coordinates of vectors in $\mathbb{R}^{2^{ {n \choose
2}}}$.
If $G$ and $H$ are in $\mathcal{L}$, we write $H \subseteq G$ if $H$ is a sub-graph (not necessarily
induced) of $G$. For integers $2 \leq m \leq n \leq \infty$ and a $G \in \mathcal{L}_n$,
$G[m]$ is the sub-graph of $G$ induced by the nodes $[m]$.

In our analysis, we will often identify a graph $G$ with its set of edges, hence ignoring isolated nodes.
The set of labeled graphs on subsets of $[n]$ without isolated nodes is denoted
by $\mathcal{I}_n$ (thus $G \in \mathcal{I}_n$ if and only if it is a subgraph of the complete graph on $[n]$ and has no isolated nodes) 
and we let $\mathcal{I} = \bigcup_{n = 2}^\infty
    \mathcal{I}_n$. Similarly, we let $\mathcal{J}_n$ denote the set of unlabeled graphs without isolated nodes and at most $n$ vertices and  $\mathcal{J} = \bigcup_{n = 2}^\infty
    \mathcal{J}_n$.

    For any finite $n \geq  2$, if $G \in \mathcal{L}_n$ and $\sigma \in \mathcal{S}_n$, the permutation
    group on $[n]$, we will let $G_\sigma$ be the graph obtained from $G$ by
    relabeling its nodes according to $\sigma$. Thus nodes $i$ and $j$ are 
    connected in $G$ if and
    only if $\sigma(i)$ and $\sigma(j)$ are connected in $G_\sigma$. Finally, we let $\mathcal{S} =
    \bigcup_n \mathcal{S}_n$ be
    the set of all finite permutations.

 \section{Probabilities on graphs: exchangeability  and geometry}
\label{sec:geometry}

We begin by introducing the notions of exchangeable distributions on networks
and illustrating their geometry properties.

For finite $n$, the  set of all  probability distributions on $\mathcal{L}_n$ can be represented
geometrically as
the probability simplex in
$\mathbb{R}^{ \mathcal{L}_n}$, denoted with $\Delta_n$. The coordinates of each vector $p \in \Delta_n$
are indexed by the graphs  in $\mathcal{L}_n$,  and the $2^{ { n \choose 2} }$
vertices of $\Delta_n$ are the unit
    masses at each $G \in \mathcal{L}_n$.

To formally define exchangeability, we first introduce the notion of {\bf marginal mapping}: for any pair of integers $2 \leq m \leq n$, this mapping is defined to be the function $\Pi_n^m
\colon \Delta_n \rightarrow \Delta_m$
mapping any probability distribution $p_n \in \Delta_n$ into the
probability distribution $\Pi^n_m(p^n) = p^m_n \in
\Delta_m$  given by
\begin{equation}\label{eq:marginal.op}
    p^m_n(H) = \sum_{ G \in \mathcal{L}_n \colon H = G[m] } p_n(G), \quad H  \in
\mathcal{L}_m.
\end{equation}
    With a slight abuse of notation we shall also think of of each $p_n \in \Delta_n$ as a measure and write $p(G[m])$ for the induced  distribution on  subgraphs:
    \[
    p(G[m]) =  p^m_n (G[m]), 
    \]
    where $p^m_n = \Pi^n_m(p)$

    \begin{definition}
    A probability distribution $p$ on $\mathcal{L}_n$ is {\bf $m$-exchangeable} when $p(G[m]) = p(G[m]_{\sigma})$ for all $\sigma \in \mathcal{S}_m$ and all $G\in \mathcal{L}_n$.
 Equivalently, $p$ is $m$-exchangeable when $p(G[m]) = p(G[m]')$ if $G[m] \sim
G[m]'$  in $\mathcal{L}_m$. If $m=n$ we say that $p$ is (finitely) {\bf exchangeable}. 
\end{definition}

We denote with $\mathcal{E}_n \subset \Delta_n$ the set of all exchangeable
distributions on $\mathcal{L}_n$. It is easy to show that exchangeable
distributions are mixtures of uniform distributions over isomorphic classes.
In fact, $\mathcal{E}_n$ is affinely isomorphic to the probability simplex in
$\mathbb{R}^{\mathcal{U}_n}$ so that $\mathcal{E}_n$ is a polytope of dimension $| \mathcal{U}_n | - 1$.
\begin{lemma}\label{lem:exchangeable.simplex}
    $\mathcal{E}_n$ is a simplex  whose vertices are the uniform probability
    distributions over isomorphic classes of $\mathcal{L}_n$. The dimension of
    $\mathcal{E}_n$ is equal to $| \mathcal{U}_n| - 1$.
\end{lemma}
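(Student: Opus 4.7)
The plan is to exhibit an explicit affine isomorphism between $\mathcal{E}_n$ and the probability simplex in $\mathbb{R}^{\mathcal{U}_n}$, from which both claims in the lemma follow immediately.

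For each isomorphism class $[G] \in \mathcal{U}_n$, I would define the uniform distribution $u_{[G]} \in \Delta_n$ by
\[
u_{[G]}(H) = \begin{cases} 1/|[G]| & \text{if } H \in [G], \\ 0 & \text{otherwise.} \end{cases}
\]
Each $u_{[G]}$ is plainly exchangeable, since relabeling permutes graphs within $[G]$ and leaves the constant value $1/|[G]|$ untouched, so $u_{[G]} \in \mathcal{E}_n$.

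Next I would show that every $p \in \mathcal{E}_n$ admits the representation
\[
p = \sum_{[G] \in \mathcal{U}_n} w_{[G]}\, u_{[G]}, \qquad w_{[G]} := \sum_{H \in [G]} p(H).
\]
Indeed, exchangeability forces $p$ to be constant on each isomorphism class, equal to $w_{[G]}/|[G]|$ on $[G]$, and this matches $w_{[G]} u_{[G]}$ on the support $[G]$; summing over $[G]$ recovers $p$ pointwise. The weights $w_{[G]}$ are nonnegative and sum to one because $p$ is a probability distribution, so this exhibits $p$ as a convex combination of the $u_{[G]}$'s.

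Finally I would argue uniqueness of the representation, which is the only place one has to be slightly careful. Because distinct isomorphism classes have disjoint supports in $\mathcal{L}_n$, the collection $\{u_{[G]} : [G] \in \mathcal{U}_n\}$ is linearly independent in $\mathbb{R}^{\mathcal{L}_n}$; equivalently, they are affinely independent. Hence the map $(w_{[G]})_{[G] \in \mathcal{U}_n} \mapsto \sum_{[G]} w_{[G]} u_{[G]}$ is an affine bijection from the probability simplex in $\mathbb{R}^{\mathcal{U}_n}$ onto $\mathcal{E}_n$. This identifies $\mathcal{E}_n$ with a simplex having the $u_{[G]}$ as its vertices and dimension $|\mathcal{U}_n| - 1$. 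No real obstacle arises; the only subtlety is noting that the disjoint-support argument suffices to conclude affine independence without any further computation.
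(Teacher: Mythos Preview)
Your proof is correct and follows essentially the same approach as the paper: define the uniform distributions $u_{[G]}$ on isomorphism classes, note they lie in $\mathcal{E}_n$, use disjoint supports to get affine independence, and use exchangeability to write every $p \in \mathcal{E}_n$ as a convex combination of them. The paper's argument is the same in substance, with only cosmetic differences in phrasing (it describes the supports as ``incomparable subsets'' in the inclusion poset rather than simply disjoint).
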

 \begin{proof}
For a given class $[H] \in \mathcal{U}_n$ let $p_{[H]}$ be the probability
distribution on $\mathcal{L}_n$ corresponding to the uniform
distribution over $[H]$.
That is, for any $G \in \mathcal{L}_n$,
    \begin{equation}\label{eq:pnH}
	p_{n,[H]}(G)  = \left\{
	    \begin{array}{cc}
		\frac{1}{|[H]|} & \text{ if } G \in [H],\\
		0 & \text{ otherwise.}\\
	\end{array}
	    \right.
	\end{equation}
Then, $p_{[H]} \in \mathcal{E}_n$ for all $[H]$. The
 vectors $\{ p_{[H]}, [H] \in \mathcal{U}_n \}$ are affinely independent, because they are
supported on incomparable subsets of $\mathcal{L}_n$, regarded as a poset with
respect to the subset inclusion. Thus, their convex hull is a simplex inside
$\mathcal{E}_n$. We will show that this simplex in fact coincides with
$\mathcal{E}_n$. 
 Let
 $p$  be any point in $\mathcal{E}_n$. By exchangeability, for any $[H] \in
 \mathcal{U}_n$, the value of $p$ at each
 of the coordinates
 indexed by the graphs in the isomorphism class $[H]$ is the same.
 Thus,
\[
    p = \sum_{[H] \in \mathcal{U}_n} v_{[H]} p_{[H]},
\]
for some sequence $\{ v_{[H]}, [H] \in \mathcal{U}_n \}$ of non-negative
numbers.
Since $\sum_{ \{ G \in \mathcal{L}_n\} }
p(G) = 1$,  it follows that $\sum_{[G]} v_{[G]} = 1$ and, therefore, that $p$ is in the
convex hull of  $\{ p_{[H]}, [H] \in \mathcal{U}_n \}$. Furthermore, since the
convex hull of the vectors $\{ p_{[H]}, [H] \in \mathcal{U}_n \}$ is a simplex,
the sequence $\{ v_{[H]}, [H] \in \mathcal{U}_n \}$  is unique.
\end{proof}

\subsubsection*{Consistent Models and Exchangeability}

A highly desirable property of a probability distribution for network data of a
given size, say $m$, is that the distribution be realized as the marginal
of probability distributions over networks of larger sizes $n$ for all $n >
m$. We refer to this property as probabilistic consistency.

\begin{definition}
A sequence $\{ p_n\}_{n=2}^{\infty}$
of probability distributions
such that $p_n \in \Delta_n$ for all $n$
is {\bf consistent} if
\begin{equation}\label{eq:consistent}
p_m = \Pi_n^m p_n, \quad \forall\; 2 \leq m \leq n.
\end{equation}
\end{definition}

If a probability distribution over networks of a given size is not part of a consistent sequence, then its
properties may not be related in any meaningful way to the properties of any probability
distribution over networks of
different sizes.

When applied to an exchangeable distribution in $\mathcal{E}_n$, the
marginal mapping $\Pi^m_n$ always  yields
an exchangeable distribution in $\mathcal{E}_m$. However, $\Pi^m_n$ is
not surjective:  there are
exchangeable distributions in $\mathcal{E}_m$ that cannot be obtained
as marginals of any exchangeable distribution on $\mathcal{E}_n$, for all
$n > m>3$.
 We formally state this fact in the next result and illustrate it in \Cref{ex:no.marginal}.

\begin{lemma}\label{lem:finite.extendability}
    For all integers $4 \leq m < n_1 < n_2$, it holds that
    $\Pi^{m}_{n_2}(\mathcal{E}_{n_2}) \subsetneq \Pi^{m}_{n_1}(\mathcal{E}_{n_1})
    \subsetneq  \mathcal{E}_m $.
\end{lemma}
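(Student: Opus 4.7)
The plan is to first establish the non-strict containments, then produce explicit witnesses for each strict inclusion. The containments $\Pi^m_{n_2}(\mathcal{E}_{n_2})\subseteq\Pi^m_{n_1}(\mathcal{E}_{n_1})\subseteq\mathcal{E}_m$ follow from the factorization $\Pi^m_{n_2}=\Pi^m_{n_1}\circ\Pi^{n_1}_{n_2}$ together with the observation that $\Pi^{n_1}_{n_2}(\mathcal{E}_{n_2})\subseteq\mathcal{E}_{n_1}$: any $\sigma\in\mathcal{S}_{n_1}$ extends to $\tilde\sigma\in\mathcal{S}_{n_2}$ fixing $\{n_1+1,\dots,n_2\}$, and the $\tilde\sigma$-invariance of $p_{n_2}$ descends to $\sigma$-invariance of its $n_1$-marginal. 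Since the sets $\Pi^m_n(\mathcal{E}_n)$ form a monotonically nested decreasing family in $n$, both strict inclusions follow if one establishes $\Pi^m_{n+1}(\mathcal{E}_{n+1})\subsetneq\Pi^m_n(\mathcal{E}_n)$ for every $n\ge m$, with the convention $\Pi^m_m=\mathrm{id}$.

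For strictness at $n=m$, I would take $p^\star:=p_{m,[C_m]}$, the uniform measure on the isomorphism class of the $m$-cycle, and show that $p^\star\in\mathcal{E}_m\setminus\Pi^m_{m+1}(\mathcal{E}_{m+1})$. If $p^\star=\Pi^m_{m+1}(q)$ for some $q\in\mathcal{E}_{m+1}$, then by exchangeability every $m$-subset of every graph $G$ in the support of $q$ induces a $C_m$. Double-counting the in-subset degree of a fixed vertex across the $m$-subsets containing it yields the identity $d_1(m-1)=2m$ for any such $G$, forcing $d_1=2+2/(m-1)$, which is non-integral for every $m\ge 4$. Combined with the monotonicity above, this rules out $p^\star\in\Pi^m_{n_1}(\mathcal{E}_{n_1})$ for all $n_1>m$.

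The inner strict inclusion at general $n>m$ is the main technical challenge. The natural strategy is to identify, for each $n$, an extreme point $\mu^m_{[H]}:=\Pi^m_n(p_{n,[H]})$ of the polytope $\Pi^m_n(\mathcal{E}_n)$, with $[H]\in\mathcal{U}_n$ chosen so that $\mu^m_{[H]}\notin\Pi^m_{n+1}(\mathcal{E}_{n+1})$. The cleanest instance is $n=m+1$ with $H=C_{m+1}$: one computes $\mu^m_{[C_{m+1}]}=p_{m,[P_m]}$, and a variant of the same double-counting forces every vertex of any hypothetical lift on $m+2$ vertices to have degree $2$ (for $m\ge 6$) or degree at most $3$ (for $m=4,5$); combined with triangle-freeness, a direct inspection of the $2$-regular, triangle-free graphs on $m+2$ vertices produces a contradiction. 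For $n>m+1$ one selects a less symmetric $H$ whose $m$-marginal spreads over several classes in $\mathcal{U}_m$ and rules out any lift to $n+1$ vertices by combining integrality constraints on induced subgraph densities with forbidden-configuration arguments. The main technical obstacle is supplying such a combinatorial certificate uniformly at every $n\ge m$; at a high level the descent must occur because $\bigcap_n\Pi^m_n(\mathcal{E}_n)$ equals the set of $m$-marginals of graphons, which is strictly smaller than $\mathcal{E}_m$, but pinning down strict descent at each single step $n\to n+1$ requires explicit case-by-case combinatorial work.
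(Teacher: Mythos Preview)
Your proposal has a genuine gap. You correctly reduce the problem to showing $\Pi^m_{n+1}(\mathcal{E}_{n+1})\subsetneq\Pi^m_n(\mathcal{E}_n)$ for every $n\ge m$, and your witness $p_{m,[C_m]}$ for the base step $n=m$ is fine (the double-counting identity $d(m-1)=2m$ is correct and gives a clean contradiction for $m\ge4$). But you do not complete the general step: for $n>m+1$ you only gesture at ``selecting a less symmetric $H$'' and explicitly concede that ``pinning down strict descent at each single step $n\to n+1$ requires explicit case-by-case combinatorial work.'' That is precisely the content of the lemma, so the proposal as written is not a proof.

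The paper avoids the step-by-step descent entirely. Its key tool is a structural result of Akiyama, Exoo and Harary: a graph $H$ on $n$ vertices has all its $m$-vertex induced subgraphs isomorphic if and only if $H$ is complete or empty (when $m\le n-2$) or node-transitive (when $m=n-1$). This immediately shows $\Pi^m_n(\mathcal{E}_n)\subsetneq\mathcal{E}_m$ for every $n>m\ge4$, since most vertices of $\mathcal{E}_m$ cannot arise as images of vertices of $\mathcal{E}_n$. For the inner strict inclusion the paper works directly with arbitrary $n_1<n_2$ (not one step at a time): it takes $p$ to be the uniform distribution on the isomorphism class of $K_{n_1-1}$ together with one isolated vertex, argues via the Akiyama et al.\ lemma that $p\notin\Pi^{n_1}_{n_2}(\mathcal{E}_{n_2})$, and then shows that $\Pi^m_{n_1}(p)$ is a vertex of $\Pi^m_{n_1}(\mathcal{E}_{n_1})$, so it cannot lie in the strictly smaller polytope $\Pi^m_{n_2}(\mathcal{E}_{n_2})$. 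The point is that a single, uniformly chosen witness handles all $n_1<n_2$ simultaneously; your inductive scheme forces you to manufacture a new combinatorial certificate at every level, which is exactly where your argument stalls.
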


\begin{proof}
We will make use of the following graph-theoretic result from \cite{akiyama:etal:79}:
\begin{lemma}\label{lem:induce-subgr}
Let $H$ be a graph with $n$ nodes. We then have the following:
\begin{enumerate}
\item All the induced subgraphs of $H$ with a fixed but arbitrary number of $m$ nodes, where $2\leq m\leq n-2$, are isomorphic if and only if $H$ is a complete or empty graph.
\item All the induced subgraphs of $H$ with $n-1$ nodes are isomorphic if and only if $H$ is a {\bf node-transitive graph}, that is for any two nodes $v_1$ and $v_2$, there is some automorphism $t$ such that $t(v_1)=v_2$.
\end{enumerate}
\end{lemma}
We will first show that $ \Pi^m_n(\mathcal{E}_n)\subsetneq
\mathcal{E}_m$ for all $4 \leq m < n$
The linearity of the marginal mapping $\Pi^m_n$, $n>m$, implies that
$\Pi^m_n(\mathcal{E}_n)$ is a polytope whose vertices are contained in the image under $\Pi^m_n$
of the vertices of $\mathcal{E}_n$. 
Thus, consider a vertex of $\mathcal{E}_n$, which, by Lemma \ref{lem:exchangeable.simplex}, can be represented by an undirected
graph on $n$ nodes, say $U$. Such a vertex is mapped by $\Pi^m_n$ into a distribution giving positive probabilities to only
induced subgraphs of $U$ of size $m$. This is a convex combination of uniform
distributions over the labeled version of each of the induced subgraphs, which
are vertices of $\mathcal{E}_m$. Hence
\begin{equation}\label{eq:inclusion}
    \Pi^m_n(\mathcal{E}_n)\subseteq
\mathcal{E}_m.
\end{equation}
We now show that $\Pi^m_n(\mathcal{E}_n)$ is a strict subset of  $\mathcal{E}_m$. To see this, notice that a vertex of $\mathcal{E}_n$ is mapped into a vertex of $\mathcal{E}_m$ if and only if it corresponds to the uniform distribution over isomorphic graphs in $\mathcal{G}_n$ such that all induced subgraphs obtained by removing any set of $n-m$ nodes are isomorphic.
By Lemma \ref{lem:induce-subgr}, if $n -m = 1$ this condition is satisfied by all
node-transitive graphs and if $n-m > 2$
only by the empty and complete graphs. This proves that the inclusion
\eqref{eq:inclusion} is strict.

We will now prove that $\Pi^{m}_{n_2}(\mathcal{E}_{n_2}) \subsetneq
\Pi^{m}_{n_1}(\mathcal{E}_{n_1})$, for all integers $4
\leq n_1<n_2$.  Since $ \Pi^{m}_{n_2}(\mathcal{E}_{n_2})  =\Pi^m_{n_1}
\left( \Pi^{n_1}_{n_2}(\mathcal{E}_{n_2}) \right) $ and, as we just saw, $\Pi^{n_1}_{n_2}(\mathcal{E}_{n_2}) \subsetneq \mathcal{E}_{n_1}$, it holds that $\Pi^{m}_{n_2}(\mathcal{E}_{n_2}) \subseteq
\Pi^{m}_{n_1}(\mathcal{E}_{n_1})$. Thus, we only need to verify that the previous inclusion is strict.
This, in turn, will follow if we exhibit a vertex $p$ of $\mathcal{E}_{n_1}$ that (i) is not in the image under $\Pi^{n_1}_{n_2}$ of $\mathcal{E}_{n_2}$ and (ii) such that $\Pi^{m}_{n_1}(p)$ is a vertex of $\Pi^{m}_{n_1}(\mathcal{E}_{n_1})$.
We choose $p$ to be the uniform distribution over graphs in $\mathcal{L}_{n_1}$ that are isomorphic to the node-disjoint union of the complete graph on $n_1 - 1$ nodes and one isolated node.  By definition, this is a vertex of $\mathcal{E}_{n_1}$ and, by \Cref{lem:induce-subgr}, is not in $\Pi^{n_1}_{n_2}(\mathcal{E}_{n_2})$, since it does not belong to the image under $\Pi_{n_2}^{n_1}$ of the vertices of $\mathcal{E}_{n_2}$. Next, $\Pi^{m}_{n_1}(p)$ obviously belongs to $\mathcal{E}_m$ and can be expressed as the mixture $\frac{m}{n_1} p' + \frac{n_1 - m}{n_1} p''$. Here, $p'$ the uniform distribution over all graphs in $\mathcal{L}_m$ that are isomorphic to the node-disjoint union of a complete graph on $m-1$ nodes and one isolated node, and $p''$ is the point mass at the complete graph in $\mathcal{L}_m$. In particular, $\Pi^{m}_{n_1}(p)$  must be a vertex of $\Pi^m_{n_1}(\mathcal{E}_{n_1})$. To see this, the node-disjoint union of a complete graph on $m-1$ nodes and one isolated node is not node-transitive, and, by \Cref{lem:induce-subgr}, cannot be  a vertex of $\Pi^m_{n_1}(\mathcal{E}_{n_1})$.  Since $\Pi^{m}_{n_1}(p)$ is the only point in $\Pi^m_{n_1}(\mathcal{E}_{n_1})$ that has such a mixture representation and is the image of a vertex of $\mathcal{E}_{n_1}$, the claim follows. Thus $p$ satisfies both properties (i) and (ii) and the proof is complete.
\end{proof}

\begin{example}\label{ex:no.marginal}
Let $p_5$ the uniform distribution on $\mathcal{L}_5$ that
assigns probability $1/15$ over all graphs isomorphic to the union of a four
cycle and an isolated node. Then, its image $p_5^4$ in $\mathcal{E}_4$  under the marginal mapping is the convex combinations of two
vertices of $\mathcal{E}_4$:  the uniform distributions over
the $3$ graphs isomorphic to the 4-cycle and the uniform distribution over
the $12$ graphs isomorphic to the union of a 2-star and an isolated node. The
weights of this mixtures are $3/15$ and $12/15$, respectively.
On the other hand,
$P^4_5(\mathcal{E}_5)$ does not contain those two vertices of $\mathcal{E}_4$,
verifying that $P^4_5(\mathcal{E}_5)$ is a strict subset of $\mathcal{E}_4$.
Furthermore, the point $p_5^4$ happens to be a vertex of $P^4_5(\mathcal{E}_5)$. To see this, it is
enough to observe that, for each of the other 33 unlabeled graphs in
$\mathcal{U}_5$, the set of induced subgraphs obtained by removing any
one node is different than (and is never contained in) the set consisting of the 4-cycle and the union of a 2-star and an isolated
node. Thus, it is not possible to represent $p_4^5$ as a convex combination of
the marginals of uniform distributions over isomorphic graphs on $5$ nodes. Since
each vertex of $P^4_5(\mathcal{E}_5)$ is the image of some vertex of
$\mathcal{E}_5$, the claim follows.
\end{example}

    The previous example has led us to conjecture that, for $4 \leq m < n$,
    each vertex of $\mathcal{E}_n$ is mapped into a vertex of $\Pi_n^m(\mathcal{E}_
    n)$. When $m=3$ and $n=4$ this is clearly not true, since in this case it is easy to see that $\Pi^3_4(\mathcal{E}_4) =
    \mathcal{E}_3$, which explains the requirement that $m \geq 4$ in \Cref{lem:finite.extendability}.
For example, the uniform
    distribution over graphs in $\mathcal{L}_4$ isomorphic to the $3$-path is not a vertex of $\Pi^3_4(\mathcal{E}_4)$.

\Cref{lem:finite.extendability} implies that a sequence of finitely
exchangeable probability distributions on graphs need not be consistent.
 That is, if one poses a finitely
exchangeable distribution $p_n$ on $\mathcal{L}_n$, while all its marginals will be
exchangeable, there is no guarantee {\it a priori\/} that $p_n$ can be realized as the marginal
of any exchangeable distribution on larger graphs.

In order for a sequence of finitely exchangeable distribution on graphs to be
consistent, finite exchangeability needs to be replaced by the stronger notion
of exchangeability, which we define next. We remark that, though our definition
may appear different from the classic definition of row and column exchangeability of symmetric
random binary
arrays as in, e.g., \cite{silverman:76}, \cite{aldous:81}, \cite{diaconis:freedman:81} and \cite{lauritzen:08}, it is in fact equivalent. Below, we will use the symbol ``$\stackrel{d}{=}$'' to denote identity in distribution.

To present exchangeability, we first notice that the definition of marginal map can be extended in a straightforward manner to distributions on $\mathcal{L}_\infty$\footnote{Unlike the set $\mathcal{L}_n$, which is finite for each $n$, $\mathcal{L}_\infty$ is uncountable. Viewed as as the product set  $ \{
               0,1\}^{E(K_{\infty})} $, where $E(K_{\infty})$ denotes
               all subsets of edges of an infinite complete graph,  $\mathcal{L}_\infty$  is a compact metric
           space under the product topology. Thus, Borel probability measures are well defined on $\mathcal{L}_\infty$.}: for any integer $m\geq
2$ and any probability distribution $p_{\infty}$ over $\mathcal{L}_\infty$,
$\Pi^m_\infty$ takes $p_\infty$ into
the distribution
$\Pi^m_\infty(p_\infty) = p^m_\infty$ in $\Delta_m$ given by
\[
    p^m_\infty(H) = \mathbb{P} ( G[m] = H), \quad H  \in
\mathcal{L}_m,
\]
where $G$ is the random graph in $\mathcal{L}_{\infty}$ with distribution $p_{\infty}$. By slightly abusing notation again, for any $p \in \mathcal{L}_\infty$ and $G \in \mathcal{L}_m$, we write $p(G)$ for $p^m_\infty(G)$, where  $p^m_\infty = \Pi^m_\infty(p)$.

\begin{definition}
    A probability distribution $p$ on $\mathcal{L}_{\infty}$ is
    exchangeable when $G \stackrel{d}{=} G_\sigma$, where $G$ denotes the random graph in $\mathcal{L}_\infty$ with distribution  $p$, for all $\sigma
    \in \mathcal{S}$.
Equivalently, $p$ is  exchangeable when, for any pair $G$ and $G'$ of
isomorphic graphs in $\mathcal{L}$, $p(G) = p(G')$.
\end{definition}

It follows that all the finite marginals $\{ p_n \}_{n=1,2,\ldots}$ of an
exchangeable distribution define a consistent sequence of finitely exchangeable
distributions. Conversely, by the Kolmogorov-Bochner extension theorem \citep[see,
e.g.][]{rao:71},  the existence of a
consistent family of finitely exchangeable distribution will guarantee the
existence of an exchangeable distribution on $\mathcal{L}_\infty$.

We let $\mathcal{E}_\infty$ denote the set of all exchangeable distributions on
$\mathcal{L}_\infty$.
$\mathcal{E}_\infty$ can be identified with a compact subset of
$[0,1]^{\mathcal{I}}$
 and is a Bauer simplex; see \eqref{eq:zinfty} and Section~\ref{sec:harmonic} below.
One of our goals in this article is to describe the relationship between finite
exchangeability and exchangeability in the present setting. In particular,  we
seek a geometric characterization of the subset of $\mathcal{E}_m$ given by
\[
    \lim_{n \rightarrow \infty} \Pi_n^m(\mathcal{E}_n) = \bigcap_n
    \Pi_n^m(\mathcal{E}_n) =
    \Pi^m_\infty(\mathcal{E}_\infty ),
\]
which, in light of \Cref{lem:finite.extendability}, is  a well-defined closed
set. We
provide a partial solution in
\Cref{sec:manifold}.

\subsubsection*{The M\"{o}bius parametrization}
Though canonical, the parametrization corresponding to the set 
$\mathcal{E}_n$ is not the most convenient. As we will see,
exchangeable distributions on graphs are better represented using marginal,
as opposed to joint, probabilities.  We will refer to this parametrization as the
{\bf M\"{o}bius parametrization,} which we describe next. We take note that this
is not a novel parametrization: it is simply the adaptation to the network setting of the
well-known representation of multivariate binary distributions by means of the
M\"{o}bius inversion formula.

     Let $M_n$ be the square matrix of
     dimension $2^{ { n \choose 2} } $ with entries indexed by graphs in $\mathcal{L}_n$ and given by
                 \begin{equation}\label{eq:M}
             M_n(F,G) = 1(F \subseteq G), \quad F,G \in \mathcal{L}_n.
                            \end{equation}
                Then, $M_n$ has full rank  \cite[see,
                e.g.,][]{ST:11} and its inverse
 has entries
            \begin{equation}\label{eq:mobius.inverse}
                M^{-1}_n(F,G) = (-1)^{| G \setminus F|}1(F \subseteq
                G), \quad F,G \in \mathcal{L}_n,
                \end{equation}
                        where, for $F \subseteq G$, $| G
                        \setminus F|$ is the number of edges $G$
                        has in excess of $F$.
                    Borrowing the terminology from \cite{drt08},
                    we define the {\bf M\"{o}bius simplex} to be the
                    set
                    \[
                        \Delta^{M}_n = \{ M_n p, \, p \in \Delta_n
                    \}.
                \]
 As
                    $\Delta^M_n$ and $\Delta_n$ are in
                    one-to-one correspondence with each other,
                    $\Delta^M_n$ is a valid parametrization of
                    all the probability distribution on
                    $\mathcal{L}_n$.

The form and probabilistic interpretation of $\Delta^M_n$ are of course quite
different from those of $\Delta_n$. Indeed, we will index the coordinates of the point in $\Delta^M_n$ by the elements of
$\mathcal{I}_n$, which we recall is the set of labeled graphs on subsets of $[n]$
without isolated nodes. For a $p \in \Delta_n$ and a $z \in \Delta^M_n$ with $z
= M_np$, the value of $z$ at any such graph $F$ is just the
{\bf marginal probability}
that $F$ is a subgraph of a random graph drawn from $p$. That is,
\begin{equation}\label{eq:zH}
    z(F) = \sum_{   H \in \mathcal{L}_n \colon F \subseteq H }p(H) =
    \mathbb{P}\left( F \subseteq G \right), \quad F \in \mathcal{I}_n,
\end{equation}
where $G$ is a random graph in $\mathcal{L}_n$ with distribution $p$. In
particular, the values of $z$ conform to the partial order on $\mathcal{I}_n$: $z(F) \geq z(F')$  if $F \subseteq F'$.
Since the
M\"{o}bius transform \eqref{eq:M} is linear and invertible,  $\Delta^M_n$ is
also a polytope (in fact a simplex),
whose vertices are the image by $M_n$ of the vertices of $\Delta_n$:
$2^{ {n \choose 2}} - 1$ vertices indexed by all non-empty graphs in
$\mathcal{I}_n$
and the vector $1 \in \mathbb{R}^{ 2^{ {n
    \choose 2} }  }$, the M\"{o}bius transform of the point mass on the complete
    graph.

The M\"{o}bius parametrization enjoys the following property, referred
to as {\bf backward compatibility}. For $m \leq n$,  let $p_n \in \Delta_n$
and $p^m_n = \Pi^m_n p_n \in \Delta_m$ be its marginal, and let $z_n$ and $z^m_n$ denote their
M\"{o}bius transforms. Then,
\begin{equation}\label{eq:backward.compatibiility}
    z^m_n(F) = z_n(F), \quad \forall F \in \mathcal{I}_m.
\end{equation}
Backward compatibility is a direct consequence of the fact the M\"{o}bius
parameters are marginal probabilities.

The image of the simplex $\mathcal{E}_n$ of finitely exchangeable probability
distributions by the M\"{o}bius transform is also a polytope (in fact, a simplex) of
the same dimension, denoted by $\mathcal{E}_n^M$, whose vertices are the M\"{o}bius transform of the
vertices of $\mathcal{E}_n$. Clearly, the vertices of $\mathcal{E}_n^M$ can also be
indexed by $\mathcal{I}_n$.
By exchangeability and \Cref{eq:zH}, for each $z \in \mathcal{E}_n^M$, $z(F) = z(F')$ whenever $F
\sim F'$ in $\mathcal{I}_n$. In fact, using \Cref{eq:mobius.inverse}, 
$\mathcal{E}_n^M$ is defined geometrically by these linear
constraints, the linear constraint that $z(\emptyset) = 1$ (where $\emptyset$
signifies the empty graph), and  the facet defining inequalities
\begin{equation}\label{eq:facet.exch.mobius.simplex}
 \sum_{ F' \in \mathcal{I}_n \colon F
    \subseteq F'} (-1)^{|F' \setminus F
    |} z(F') \geq 0, \quad \forall F \in \mathcal{I}_n, F \neq \emptyset.
\end{equation}

\section{Homomorphism and isomorphism densities}
\label{sec:densities}

Next, we will recall some graph-theoretic quantities that play a key role in
our derivations. It is not a coincidence that these very same  quantities are
also used in the
theory of graph limits. See, e.g., \cite{lov12}.

Let $G \in \mathcal{L}_n$ and $F \in \mathcal{L}_m$, where $m
\leq n$ (this last requirement is not technically necessary; however we will
assume it throughout). The \textbf{homomorphism density} of $H$ in $G$ is
    \begin{equation}\label{eq:hom}
    t_{\mathrm{hom}}(F,G) = \frac{\mathrm{hom}(F,G)}{n^m},
\end{equation}
and is equal to the fraction of all mappings from $[m]$ into $[n]$ that define a
    homomorphism (adjacency preserving mapping) between $F$ and $G$.
  Density homomorphisms are {\bf multiplicative:}
    \begin{equation}\label{eq:multiplicative}
t_{\mathrm{hom}}(F_1F_2,G) =
    t_{\mathrm{hom}}(F_1,G) t_{\mathrm{hom}}(F_2,G),
    \end{equation}
where $F_1F_2$ is the
node-disjoint union of $F_1$ and $F_2$. As we will see, this
is the graph-theoretical counterpart to a fundamental
 probabilistic property known in the literature on
exchangeable arrays as the dissociated property. 

    A related concept is that  of the \textbf{injective
    homomorphism density} of $F$ in $G$,
    \begin{equation}\label{eq:inj}
    t_{\mathrm{inj}}(F,G) = \frac{\mathrm{inj}(F,G)}{ (n)_m},
\end{equation}
    where $(n)_m =  n!/(n-m)!$,
    and ${\mathrm{inj}}(F,G) $ is the number of injective mappings from $[m]$ into $[n]$ that define a
    homomorphism between $F$ and $G$.

    \noindent {\bf Remark.} If $F$ has isolated nodes, the values of both $t_{\mathrm{hom}}(F,G)$ and
    $t_{\mathrm{inj}}(F,G)$ do not change if $F$ is replaced by the smaller
    sub-graphs induced by the nodes of positive degree.  Therefore, there is no
    loss of generality in assuming that $F \in \mathcal{I}_n$ when dealing
    with the quantities in \eqref{eq:hom} and \eqref{eq:inj}.

    In a similar manner, for $G \in \mathcal{L}_n$ and $F \in \mathcal{L}_m$, we define the \textbf{isomorphism density} of $F$ in $G$
    as
    \[
    t_{\mathrm{iso}}(F,G) = \frac{\mathrm{iso}(F,G)}{n^m},
    \]
    where $\mathrm{iso}(F,G)$ is the number of maps from $[m]$ into
    $[n]$ that preserve both adjacency and non-adjacency, i.e., such that the
    induced subgraph of $G$ is isomorphic to $F$.
    Finally, let
    \begin{equation}\label{eq:ind.graph}
    t_{\mathrm{ind}}(F,G) = \frac{\mathrm{ind}(F,G)}{(n)_m}
\end{equation}
    be the injective isomorphism density, where $\mathrm{ind}(F,G)$ is the
    number of injective mappings from $[m]$ into
    $[n]$ that preserve both adjacency and non-adjacency, i.e. the number of
    isomorphisms from $F$ into induced subgraphs of $G$ with $m$ nodes.

     The next two results, whose proofs are straightforward and therefore
     omitted, provide  a more statistically transparent
   interpretation of homomorphism and isomorphism densities.
The difference between injective and non-injective densities is precisely the
difference between sampling with and without replacement.
    \begin{lemma}
	\label{lem:noinj}
    	Fix a $G \in \mathcal{L}_n$.  Let $(U_1,\ldots,U_m)$ be independent random
    variables uniformly distributed over $[n]$ and $H$ be the random graph on $[m]$ where
    $i \sim j$ in $H$ if and only if $U_i \sim U_j$ in $G$.
    Then, for any graph $F \in \mathcal{L}_m$,
    \[
	t_{\mathrm{hom}}(F,G) = \mathbb{P} \left( F \subseteq H \right) \quad
	\text{and} \quad 	t_{\mathrm{iso}}(F,G) = \mathbb{P} \left( F = H
	\right).
	\]

	Let  $(U'_1,\ldots,U'_m)$ be the
	sequence of random variables describing  the
    outcomes of $m$ draws without replacements of $n$ labelled equiprobable balls. Let $H$ be a random
    graph on $m$ nodes such that $i \sim j$ if and only if $U'_i \sim U'_j$ in $G$.
     Then, for any graph $F \in \mathcal{L}_m$,
    \[
	t_{\mathrm{inj}}(F,G) = \mathbb{P}' \left( F \subseteq H \right) \quad
	\text{and} \quad 	t_{\mathrm{ind}}(F,G) = \mathbb{P}' \left( F =
	H \right).
	\]
    \end{lemma}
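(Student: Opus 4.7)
The plan is to unwind the definitions and observe that each density is literally the probability statement in question, after identifying the random vectors $(U_1,\ldots,U_m)$ and $(U_1',\ldots,U_m')$ with uniformly distributed maps from $[m]$ into $[n]$.

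First I would set up the bijection between samples and maps: the vector $(U_1,\ldots,U_m)$ is uniform on $[n]^m$, which is the same as a uniformly chosen map $\phi\colon[m]\to[n]$ (with $n^m$ equally likely outcomes), while $(U_1',\ldots,U_m')$ is uniform on the set of injective maps $[m]\to[n]$ (of which there are $(n)_m$). Under the definition of $H$, the edge set of $H$ is exactly $\{ij:\phi(i)\sim\phi(j)\text{ in }G\}$, noting that if $\phi(i)=\phi(j)$ then $ij\notin E(H)$ since $G$ is simple.

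Next, I would read off the four identities. For any fixed $F\in\mathcal{L}_m$, the event $F\subseteq H$ is the event that every edge of $F$ becomes an edge of $G$ under $\phi$, i.e.\ that $\phi$ is a homomorphism $F\to G$. The number of $\phi$ for which this occurs is $\mathrm{hom}(F,G)$ in the with-replacement case and $\mathrm{inj}(F,G)$ in the without-replacement case, giving
\[
\mathbb{P}(F\subseteq H)=\frac{\mathrm{hom}(F,G)}{n^m}=t_{\mathrm{hom}}(F,G),\qquad \mathbb{P}'(F\subseteq H)=\frac{\mathrm{inj}(F,G)}{(n)_m}=t_{\mathrm{inj}}(F,G).
\]
Similarly, the event $F=H$ says $ij\in E(F)\Leftrightarrow\phi(i)\sim\phi(j)$ in $G$, i.e.\ $\phi$ preserves both adjacency and non-adjacency; counting such $\phi$ yields $\mathrm{iso}(F,G)$ and $\mathrm{ind}(F,G)$ respectively, so dividing by the appropriate normalizers gives $t_{\mathrm{iso}}(F,G)$ and $t_{\mathrm{ind}}(F,G)$.

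There is no real obstacle: the whole statement is a definitional rephrasing, and the lemma is declared in the excerpt to have a proof that is \emph{straightforward and therefore omitted}. The only small subtlety worth explicitly acknowledging is that in the non-injective case a collision $\phi(i)=\phi(j)$ automatically places $ij$ outside $E(H)$, which is consistent with the convention that $F\subseteq H$ (resp.\ $F=H$) forces $\phi(i)\ne\phi(j)$ whenever $ij\in E(F)$, so no edges of $F$ are ever lost by this convention.
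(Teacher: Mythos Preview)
Your proposal is correct and is exactly the kind of direct definitional unwinding the paper has in mind; indeed the paper omits the proof entirely as ``straightforward,'' and your argument supplies precisely those details, including the one mild subtlety about collisions forcing $ij\notin E(H)$ in the with-replacement case.
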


Using the above representation we immediately obtain the following well-known
bound on the difference
    between subgraph densities arising from injective and non-injective
    mappings. These bounds will also be
used in the proof of \Cref{thm:main}.
    \begin{lemma}\label{lem:combinatorial}
 Let $G \in \mathcal{L}_n$ and $m \leq n$. For any $A \subseteq \mathcal{L}_m$,
 set
 \begin{equation}\label{eq:iso.A}
	    t_{\mathrm{ind}}(A,G) = \sum_{F \in A}t_{\mathrm{ind}}(F,G) \quad \text{and}
	    \quad  t_{\mathrm{iso}}(A,G) = \sum_{F \in A}t_{\mathrm{iso}}(F,G).
	\end{equation}
Then,
 \begin{equation}\label{eq:iso.A.bound}
     \sup_{A \subseteq \mathcal{L}_m}  \big| t_{\mathrm{iso}}(A,G) - t_{\mathrm{ind}}(A,G)
\big|\leq  1- \frac{(n)_m}{n^m}.
\end{equation}
As a result,
\begin{equation}\label{eq:hom.bound}
\sup_{F \in \mathcal{L}_m}   \big|  t_{\mathrm{hom}}(F,G) - t_{\mathrm{inj}}(F,G)
  \big| \leq  1- \frac{(n)_m}{n^m}.
 \end{equation}
 \label{lem:bound}
\end{lemma}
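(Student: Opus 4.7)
The natural approach is to reduce both inequalities to a coupling argument between sampling with and without replacement, exploiting the probabilistic interpretation supplied by \Cref{lem:noinj}. Write $t_{\mathrm{iso}}(A,G) = \sum_{F \in A} \mathbb{P}(H = F) = \mathbb{P}(H \in A)$, where $H$ is the graph on $[m]$ induced by a uniform with-replacement sample $(U_1,\ldots,U_m)$, and similarly $t_{\mathrm{ind}}(A,G) = \mathbb{P}'(H \in A)$ for the without-replacement analogue. The two probability laws admit an obvious coupling on the same probability space: let $E$ be the event that $U_1,\ldots,U_m$ are pairwise distinct, so that $\mathbb{P}(E) = (n)_m/n^m$, and note that conditional on $E$ the with-replacement tuple is uniform over injective maps $[m]\to[n]$, hence has exactly the law of the without-replacement tuple.

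From this coupling I would write
\[
t_{\mathrm{iso}}(A,G) = \mathbb{P}(H \in A, E) + \mathbb{P}(H \in A, E^c) = \tfrac{(n)_m}{n^m}\,\mathbb{P}'(H\in A) + \mathbb{P}(H \in A, E^c),
\]
so that
\[
t_{\mathrm{iso}}(A,G) - t_{\mathrm{ind}}(A,G) = \mathbb{P}(H \in A, E^c) - \Bigl(1 - \tfrac{(n)_m}{n^m}\Bigr)\mathbb{P}'(H \in A).
\]
Both summands on the right lie in the interval $[0,\,1-(n)_m/n^m]$, since $\mathbb{P}(H\in A,E^c)\leq \mathbb{P}(E^c)$ and $\mathbb{P}'(H\in A)\leq 1$. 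Taking absolute values gives \eqref{eq:iso.A.bound} uniformly over $A \subseteq \mathcal{L}_m$.

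For \eqref{eq:hom.bound} I would simply observe that, by the same lemma, $t_{\mathrm{hom}}(F,G) = \mathbb{P}(F \subseteq H) = \mathbb{P}(H \in A_F)$ with $A_F = \{F' \in \mathcal{L}_m : F \subseteq F'\}$, and likewise $t_{\mathrm{inj}}(F,G) = \mathbb{P}'(H \in A_F)$. Hence $t_{\mathrm{hom}}(F,G) - t_{\mathrm{inj}}(F,G) = t_{\mathrm{iso}}(A_F,G) - t_{\mathrm{ind}}(A_F,G)$, and \eqref{eq:hom.bound} is an immediate consequence of \eqref{eq:iso.A.bound}.

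There is no real obstacle here; the only thing one has to be careful about is recognizing that the two non-negative terms in the decomposition are separately bounded by $1-(n)_m/n^m$ (rather than summing them naively, which would give a factor of $2$). The argument is essentially the total-variation bound between sampling with and without replacement, specialized to events expressed in terms of the induced graph.
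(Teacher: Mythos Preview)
Your argument is correct and is essentially the paper's own: the paper first notes that \eqref{eq:iso.A.bound} is the total-variation bound between sampling with and without replacement (citing Freedman), and then writes out a combinatorial version whose key decomposition $t_{\mathrm{iso}}(A,G)-t_{\mathrm{ind}}(A,G)=\mathrm{notinjiso}(A,G)/n^m - t_{\mathrm{ind}}(A,G)\bigl(1-(n)_m/n^m\bigr)$ is exactly your identity $\mathbb{P}(H\in A,E^c)-\bigl(1-(n)_m/n^m\bigr)\mathbb{P}'(H\in A)$ expressed in counting rather than coupling language. The deduction of \eqref{eq:hom.bound} from \eqref{eq:iso.A.bound} via $A_F=\{F'\in\mathcal{L}_m:F\subseteq F'\}$ is likewise the same.
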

\begin{proof}
    It is enough to prove \eqref{eq:iso.A.bound}, since \eqref{eq:hom.bound} clearly follows from it.
    By \Cref{lem:noinj},  \eqref{eq:iso.A.bound} can be established by a well known bound on
    the total variation distance between a sample with and without replacement:
    see, e.g., \cite{freedman:77}.
    Here we give an alternative proof based on sub-graph densities, and inspired by the arguments used in \cite{matus:95}.
    Let $\mathrm{notinjiso}(F,G)$ denote the number of non-injective mappings from $[k]$ into
    $[n]$ that define isomorphisms between $F$ and $G$, so that
    $\mathrm{iso}(F,G) = \mathrm{notinjiso}(F,G) + \mathrm{ind}(F,G)$. For any $A
    \subseteq \mathcal{A}_m$, let $\mathrm{notinjiso}(A,G) = \sum_{F \in A}
    \mathrm{notinjiso}(F,G)$. By \Cref{lem:noinj},
both $t_{\mathrm{ind}}(A,G) $ and $t_{\mathrm{iso}}(A,G) $ are probabilities
and, therefore, take values in $[0,1]$.
    Thus,
\begin{align*}
    t_{\mathrm{iso}}(A,G) -  t_{\mathrm{ind}}(A,G)	& =
    \frac{\mathrm{notinjiso}(A,G) +
	\mathrm{ind}(A,G)}{n^k}  - \frac{\mathrm{ind}(A,G)}{(n)_k} \\
	 & =  \frac{\mathrm{notinjiso}(A,G)}{n^k}  -
	 \frac{\mathrm{ind}(A,G)}{(n)_k}
\left( 1 - \frac{(n)_k}{n^k} \right).
    \end{align*}
Since, trivially, $ 0 \leq \mathrm{notinjiso}(A,G) \leq n^k - (n)_k$, we obtain
that $0 \leq  \frac{\mathrm{notinjiso}(A,G)}{n^k}
\leq  \left( 1 - \frac{(n)_k}{n^k} \right)$. Using both bounds in the
previous display yields that
\[
- t_{\mathrm{ind}}(A,G) \left( 1 - \frac{(n)_k}{n^k} \right) \leq
t_{\mathrm{iso}}(A,G) -  t_{\mathrm{ind}}(A,G) \leq \left( 1 -
t_{\mathrm{ind}}(A,G) \right)	\left( 1 - \frac{(n)_k}{n^k} \right).
    \]
The claimed bound follows since $0 \leq
t_{\mathrm{ind}}(A,G) \leq 1$. 
\end{proof}

\noindent {\bf Remark.} The above bound can be weakened to the simpler bound $ {m \choose 2}/n$. See also Lemma 2.1 in
\cite{lov06}.

The value of $t_{\circ}(F,G)$ remains unchanged if one or
both of its arguments $F$ and $G$ are replaced by isomorphic graphs $F' \sim F$ and $G' \sim
G$, where $t_{\circ}$ is any of the densities
introduced above.
Thus, these graph densities remain well defined if one or both of their
arguments belong to $\mathcal{U}$. The next result uses this fact to establish
 a correspondence between injective densities and the concepts introduced in
 \Cref{sec:geometry}.
 It will be used in the proof of \Cref{thm:main}.

\begin{lemma}\label{lem:zn.density}
   Let $p_n$ be the vertex  of $\mathcal{E}_n$ corresponding to the
   uniform distribution over  the class $[G]$ and $p_n^m$ its
   image under the marginal mapping $\Pi^m_n$, where $2 \leq m < n$. Let
   $z^m_n$ be the
   M\"{o}bius transform of $p^m_n$.  Then, for all $F \in \mathcal{I}_m$,
   $z^m_n(F) = t_{\mathrm{inj}}(F,[G])
   $ and, for all $F \in \mathcal{L}_m$,
   $p^m_n(F) =   t_{\mathrm{ind}}(F,[G])$.
\end{lemma}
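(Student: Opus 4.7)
The plan is to realize the uniform distribution over the isomorphism class $[G]$ as the pushforward of the uniform distribution on $\mathcal{S}_n$ under the map $\sigma \mapsto G_\sigma$, and then reduce the two claims directly to \Cref{lem:noinj}. The fact that both right-hand sides depend only on $[G]$ (not on the chosen representative) is built into the notation $t_\circ(F,[G])$ and follows from the invariance of the densities $t_\mathrm{ind}$ and $t_\mathrm{inj}$ under isomorphism of their second argument, so we may fix an arbitrary representative and write $G$ in place of $[G]$ throughout.

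First I would fix $G \in [G]$ and note that sampling $\sigma$ uniformly from $\mathcal{S}_n$ produces $G_\sigma$ distributed uniformly over $[G]$, since each element of $[G]$ is the image of exactly $|\mathrm{Aut}(G)|$ permutations. Thus if $\mathbf{G}$ has law $p_n$, I may take $\mathbf{G} = G_\sigma$ with $\sigma$ uniform on $\mathcal{S}_n$. Unpacking the relabeling convention, an edge $\{i,j\}$ with $i,j\in[m]$ is present in $\mathbf{G}[m]$ if and only if $\sigma^{-1}(i) \sim \sigma^{-1}(j)$ in $G$. Let $\psi := \sigma^{-1}\big|_{[m]}$; as $\sigma$ ranges uniformly over $\mathcal{S}_n$, the map $\psi$ is a uniformly chosen injection $[m]\to[n]$, because every injection has exactly $(n-m)!$ extensions to a permutation. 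Consequently $\mathbf{G}[m]$ has the same distribution as the random graph $H$ built from a without-replacement sample $(U'_1,\dots,U'_m)$ in \Cref{lem:noinj}.

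For the second claim, by \eqref{eq:marginal.op} applied to $p_n$,
\[
p_n^m(F) = \mathbb{P}\bigl(\mathbf{G}[m] = F\bigr) = \mathbb{P}'(H = F) = t_{\mathrm{ind}}(F,G),
\]
where the last equality is exactly the second identity in \Cref{lem:noinj}. For the first claim, the Möbius parameter has the interpretation \eqref{eq:zH},
\[
z_n^m(F) = \mathbb{P}\bigl(F \subseteq \mathbf{G}[m]\bigr) = \mathbb{P}'(F \subseteq H) = t_{\mathrm{inj}}(F,G),
\]
again by \Cref{lem:noinj}. For $F\in \mathcal{I}_m$, the condition $F\subseteq \mathbf{G}[m]$ only constrains the non-isolated vertices of $F$, which is consistent with the remark that $t_{\mathrm{inj}}$ is unaffected by isolated nodes; no separate treatment of isolated nodes is needed.

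The main obstacle, though minor, is purely bookkeeping: one has to be careful that the relabeling convention $G_\sigma$ (nodes $i,j$ adjacent in $G$ iff $\sigma(i),\sigma(j)$ adjacent in $G_\sigma$) makes the \emph{inverse} permutation $\sigma^{-1}$, restricted to $[m]$, the relevant uniform random injection, and that the counting $|\mathcal{S}_n|/|[G]| = |\mathrm{Aut}(G)|$ correctly gives a uniform distribution on $[G]$ rather than a weighted one. Once that is in place, the two identities are immediate from \Cref{lem:noinj} with no further computation required.
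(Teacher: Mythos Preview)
Your argument is correct. The core observation is the same as the paper's: sampling $\mathbf{G}$ uniformly from $[G]$ and restricting to $[m]$ is equivalent to sampling a uniform injection $[m]\to[n]$ and pulling back the edges of a fixed representative $G$. The paper carries this out as an explicit chain of identities, writing
\[
z^m_{[H]}(F)=z_{[H]}(F)=p_{[H]}(\{G':F\subseteq G'\})=\frac{1}{n!}\sum_{\sigma\in\mathcal{S}_n}\mathbf{1}(F\subseteq G_\sigma)=t_{\mathrm{inj}}(F,[H]),
\]
invoking backward compatibility and exchangeability and then recognizing the last average as the injective homomorphism density directly from its definition. You instead identify the law of $\mathbf{G}[m]$ with the without-replacement graph $H$ of \Cref{lem:noinj} and read off both identities from that lemma. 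The difference is purely in packaging: your route is shorter and more conceptual because it reuses \Cref{lem:noinj} as a black box, whereas the paper's proof is self-contained and makes the permutation average visible. Your bookkeeping on the relabeling convention (that $\sigma^{-1}|_{[m]}$, not $\sigma|_{[m]}$, is the relevant uniform injection) is correct and is exactly the point where a careless reader might slip.
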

\begin{proof}
We will give a proof only for the identity involving the injective homomorphism density, since the same
arguments apply to the one involving the injective isomorphism density.
Let $[H] \in \mathcal{U}_n$ be a given isomorphism class in $\mathcal{L}_n$ and
$p_{[H]}$ the point in $\mathcal{E}_n$ corresponding to the uniform
distribution over $[H]$. For a given $F \in \mathcal{I}_m$, let $B =
\{ G \in \mathcal{L}_n \colon F \subseteq G \}$. For any $\sigma \in
\mathcal{S}_n$, let $\sigma^{-1}(B) = \{ G \in \mathcal{L}_n \colon F \subseteq
G_\sigma \}$, that is, the set of $G$ such that $G_{\sigma} \in B$. With a
slight abuse of notation we write $p_{[H]}(B) = \sum_{ G \in B} p_{[H]}(G)$. Then,
\begin{align*}
    z^m_{[H]}(F) & = z_{[H]}(F)\\
& = \sum_{G \in \mathcal{L}_n} 1(F \subseteq G) p_{[H]}(G) \nonumber \\
    & = p_{[H]}(B)\\
    & = \frac{1}{n!} \sum_{\sigma \in
	\mathcal{S}_n} p_{[H]}( \sigma^{-1}(B))\\
	& = \frac{1}{n!} \sum_{G \in [H]} \sum_{\sigma \in
	    \mathcal{S}_n} 1(F \subseteq G_\sigma) p_{[H]}(G)\\
	    & = \sum_{G \in [H]} \frac{ \left| \{ \sigma  \in
		\mathcal{S}_n \colon
	    F \subseteq G_\sigma \} \right|}{n!} p_{[H]}(G)\\
	    & = t_{\mathrm{inj}}(F,[H]) \sum_{G \in [H]} p_{[H]}(G)\\
	    & = t_{\mathrm{inj}}(F,[H]).
    \end{align*}
    The first identity follows from the backward compatibility of the M\"{o}bius
    transform, the fourth identity follows from exchangeability and the last
    identity uses the facts that, for any $\sigma \in \mathcal{S}_n$, $G_\sigma \in
    [H]$ if and only if $G \in [H]$ and that $t_{\mathrm{inj}}(F,\cdot)$ is
    constant over $[H]$ (with  the common value denoted as
    $t_{\mathrm{inj}}(F,[H])$).
\end{proof}

\section{deFinetti theorems for exchangeable distributions on graphs}\label{sec:definetti}
\subsection{A finite deFinetti theorem}

	In this section we will use the sub-graph densities  introduced in
	\Cref{sec:densities} to derive a  deFinetti theorem for
	finitely exchangeable probability distributions on graphs based on the
	M\"{o}bius parametrization. The results show that the
	M\"{o}bius parameters  of a finitely exchangeable distribution on
	$\mathcal{L}_m$ that
	extends to a finitely exchangeable distribution on $\mathcal{L}_n$,
	where $2 \leq m < n$, are
	the expected values of the injective density homomorphisms. These are
 approximated uniformly well
	by  the expected values of the density homomorphisms with the
	approximation error of order $O({m^2}/{n})$. The proof is a
	simple application of \Cref{lem:zn.density} and
	\Cref{lem:combinatorial} and is the graph-theoretical counterpart of the
	proof of Theorem 1 in \cite{matus:95}.

    \begin{theorem}[{\bf deFinetti's theorem for finitely exchangeable
    	distributions on graphs}]
	\label{thm:main}
	Let $p_n \in \mathcal{E}_n$ and $p^m_n = \Pi^m_n p_n$  where $m \leq n$.
	Let $z^m_n$ be the corresponding M\"{o}bius
	transform of $p^m_n$.
	Then, for any subgraph $F \in \mathcal{I}_m$,
\begin{equation}\label{eq:exch.second}
 z^m_{n}(F) = \sum_{G \in \mathcal{L}_n}
				     t_{\mathrm{inj } }(F,G) p_n(G)
				 \end{equation}
    	and
\begin{equation}\label{eq:exch.second.2}
    \max_{ F  \in \mathcal{I}_m} \Big| z^m_n(F) - \sum_{G \in \mathcal{L}_n}
	    t_{\mathrm{hom}}(F,G) p_n(G) \Big| \leq 1 - \frac{(n)_m}{n^m}.
\end{equation}
\end{theorem}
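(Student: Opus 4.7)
The plan is to establish \eqref{eq:exch.second} first by decomposing $p_n$ along the vertex set of the simplex $\mathcal{E}_n$ furnished by \Cref{lem:exchangeable.simplex} and then applying \Cref{lem:zn.density} to each vertex; the inequality \eqref{eq:exch.second.2} then follows by averaging the pointwise bound from \Cref{lem:combinatorial} against $p_n$. Concretely, write $p_n = \sum_{[H] \in \mathcal{U}_n} v_{[H]}\, p_{[H]}$ as the unique convex combination of the vertices of $\mathcal{E}_n$. Because both $\Pi^m_n$ and the M\"{o}bius transform are linear, the M\"{o}bius transform $z^m_n$ of $p^m_n$ decomposes as $z^m_n = \sum_{[H]} v_{[H]}\, z^m_{[H]}$, where $z^m_{[H]}$ denotes the M\"{o}bius transform of the marginal of $p_{[H]}$. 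By \Cref{lem:zn.density}, $z^m_{[H]}(F) = t_{\mathrm{inj}}(F,[H])$ for every $F \in \mathcal{I}_m$.

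Next I would convert the class-indexed sum to the graph-indexed sum appearing in \eqref{eq:exch.second}. Using $p_n(G) = v_{[G]}/|[G]|$, which follows from \eqref{eq:pnH}, together with the fact that $t_{\mathrm{inj}}(F,\cdot)$ is constant on isomorphism classes, I rearrange
\[
\sum_{G \in \mathcal{L}_n} t_{\mathrm{inj}}(F,G)\, p_n(G) = \sum_{[H] \in \mathcal{U}_n} \frac{v_{[H]}}{|[H]|} \sum_{G \in [H]} t_{\mathrm{inj}}(F,G) = \sum_{[H] \in \mathcal{U}_n} v_{[H]}\, t_{\mathrm{inj}}(F,[H]),
\]
which equals $z^m_n(F)$ by the previous paragraph, establishing \eqref{eq:exch.second}.

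For \eqref{eq:exch.second.2}, I invoke the bound \eqref{eq:hom.bound} of \Cref{lem:combinatorial}, which asserts that $|t_{\mathrm{hom}}(F,G) - t_{\mathrm{inj}}(F,G)| \leq 1 - (n)_m/n^m$ uniformly in $F \in \mathcal{L}_m$ and $G \in \mathcal{L}_n$. Averaging this against $p_n$ and applying the triangle inequality, together with \eqref{eq:exch.second}, yields
\[
\Big| z^m_n(F) - \sum_{G \in \mathcal{L}_n} t_{\mathrm{hom}}(F,G)\, p_n(G) \Big| \leq \sum_{G \in \mathcal{L}_n} \big| t_{\mathrm{inj}}(F,G) - t_{\mathrm{hom}}(F,G) \big|\, p_n(G) \leq 1 - \frac{(n)_m}{n^m},
\]
uniformly over $F \in \mathcal{I}_m$, which is exactly \eqref{eq:exch.second.2}.

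There is no real obstacle here once the right representation is fixed: the entire argument is an instance of ``decompose along extreme points, apply the lemma at each extreme point, then recombine.'' The only point requiring a little care is the re-indexing between the class-level quantities $t_{\mathrm{inj}}(F,[H])$ delivered by \Cref{lem:zn.density} and the labeled-graph sum appearing in the statement of the theorem; this is handled by the explicit formula \eqref{eq:pnH} for the vertices of $\mathcal{E}_n$ and the isomorphism-invariance of the injective densities.
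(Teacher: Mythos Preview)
Your proposal is correct and follows essentially the same route as the paper: decompose $p_n$ along the vertices of $\mathcal{E}_n$ via \Cref{lem:exchangeable.simplex}, apply \Cref{lem:zn.density} at each vertex to obtain $z^m_{[H]}(F)=t_{\mathrm{inj}}(F,[H])$, re-index from classes to labeled graphs using \eqref{eq:pnH}, and then average the pointwise bound from \Cref{lem:combinatorial} to get \eqref{eq:exch.second.2}. The only cosmetic difference is that the paper first bounds by $1-(n)_k/n^k$ for $F$ with $k$ vertices and then invokes the monotonicity \eqref{eq:bound}, whereas you appeal directly to the uniform $\mathcal{L}_m$ bound \eqref{eq:hom.bound}; both are valid.
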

\begin{proof}
	The proof relies on Lemma \ref{lem:bound} and can be regarded as extension  to the network
	setting of the
	geometric arguments used in  \cite{diaconis:77}. See also \cite{diaconis:freedman:81} and
	\cite{ker06}, and in particular, \cite{matus:95}.

			Far any isomorphism  class
			$[H] \in \mathcal{U}_n$,
	 and any
		$F \in \mathcal{I}_m$,
    \begin{equation}\label{eq:inj.pointmass}
 t_{\mathrm{inj}}(F,G) =  t_{\mathrm{inj}}(F,G'), \quad \forall G, G' \in
 [H].
 \end{equation}
 As before, $t_{\mathrm{inj } }(F,[H])$ denotes
				the common value of $t_{\mathrm{inj } }(F,G)$
				for all graphs $G\in [H]$.

	Next,  let $p_{n,[H]}$
    be the finitely exchangeable probability distribution on $\mathcal{L}_n$
    corresponding to the uniform distribution over the isomorphic class $[H]$,
    as described in \Cref{eq:pnH}.
	Since $p_{n,[H]}$ is a vertex of $\mathcal{E}_n$ by
	\Cref{lem:exchangeable.simplex}, its M\"{o}bius transform  $z_{n,[H]}$ is a vertex of
	$\mathcal{E}_n^M$. Then, because $\mathcal{E}_n^M$ is a simplex, any point in
	$z_n \in \mathcal{E}_n^M$ can be written as
				\[
z_n =  \sum_{[H] \in \mathcal{U}_n} w_{[H]} z_{n,[H]},
				    \]
				    for a unique sequence of non-negative
				    numbers $\{
				    	w_{[H]}, [H] \in \mathcal{U}_n\}$ such
					that $\sum_{[H] \in \mathcal{U}_n}
					w_{[H]} = 1$.

					Let  $p_{n,[H]}^m = \Pi^m_n
					p_{n,[H]}$ be
     the probability distribution over
    $\mathcal{L}_m$ obtained by marginalizing over $p_{n,[H]}$ and  $z^m_{n,[H]}$ be
			 its   M\"{o}bius
			    transform.  
    Then, for any $F \in \mathcal{I}_m$,
    \begin{equation}\label{eq:inj.expand}
				z^m_{n,[H]}(F) = t_{\mathrm{inj } }(F,[H]) =
				\sum_{G \in [H]} \frac{1}{[H]} t_{\mathrm{inj
				} }(F,G) ,
			    \end{equation}
				where the first identity follows from
				\Cref{lem:zn.density}.

   As a result, for any $F \in \mathcal{I}_m$,
    \begin{align*}
z_n^m(F) & = \sum_{[H] \in \mathcal{U}_n} w_{[H]} z^m_{n,[H]}(F) \\
& = \sum_{[H] \in
    \mathcal{U}_n} w_{[H]} t_{\mathrm{inj}}(F,[H])\\
    & =  \sum_{[H] \in \mathcal{U}_n}  w_{[H]} \left( \sum_{G \in [H]}
    \frac{1}{|[H]|} 	t_{\mathrm{inj}}(F,G) \right)  \\
    & = \sum_{[H] \in \mathcal{U}_n}  w_{[H]} \left( \sum_{G \in [H]} p_{n,[H]}(G)
    t_{\mathrm{inj}}(F,G) \right)  \\
    & = \sum_{G \in \mathcal{G}_n} t_{\mathrm{inj}}(F,G) \left( \sum_{[H] \in \mathcal{U}_n} w_{[H]}
    p_{n,[H]}(G) \right) \\
    & = \sum_{G \in \mathcal{G}_n}   t_{\mathrm{inj}}(F,G)p_n(G),
    \end{align*}
    where the first, second and fourth identities follow from the linearity of
marginal operation, \Cref{eq:inj.expand}, and \Cref{eq:pnH},
respectively. Thus, \Cref{eq:exch.second} follows.

Using the previous identity, for a given $F \in \mathcal{I}_m$ of size, say, $k$,
\begin{align*}
\left| z_n^m(F) -\sum_{G \in \mathcal{G}_n}
t_{\mathrm{hom}}(F,G) p_n(G)  \right| & = \left| \sum_{G \in \mathcal{G}_n}
t_{\mathrm{inj}}(F,G) p_n(G)  -\sum_{G \in \mathcal{G}_n}
t_{\mathrm{hom}}(F,G) p_n(G)  \right| \\
& \leq \sum_{G \in \mathcal{G}_n} \left|
t_{\mathrm{inj}}(F,G) - t_{\mathrm{hom}}(F,G)
\right| p_n(G)\\
& \leq 1 - \frac{(n)_k}{n^k},
    \end{align*}
   where the last inequality is due to \Cref{lem:bound}.
   \Cref{eq:exch.second.2} is established by noting that
\begin{equation}\label{eq:bound}
				    1 - \frac{(n)_k}{n^k} \leq 1 -
				    \frac{(n)_m}{n^m} ,
				\end{equation}
				    for all integer $k < m$.
    \end{proof}	

	 The theorem further implies that, for any finitely exchangeable
	 distributions on $\mathcal{L}_n$, the marginal probabilities of
	 all its small sub-graphs are well
	       approximated by a certain mixture of
	       densities homomorphisms of such sub-graphs, with the mixing
	       measure defined over isomorphisms class in $\mathcal{L}_n$.
	       Formally, we have the following:

	       \begin{corollary}
		   \label{cor:finite.ex}
		   Assume $2 \leq m < n$. Let $p_n \in \mathcal{E}_n$ and $z_n$ be its M\"{o}bius
		   transform. Then, there exists a probability
		   distribution $\{ w_{U}, U \in
		   \mathcal{U}_n \}$ on $\mathcal{U}_n$, uniquely determined by $p_n$,  such that
		   \[
		      \left| z_n(F) - \sum_{U \in \mathcal{U}_n} w_{U}
		       t_{\mathrm{hom}}(F,U) \right| \leq 1 -
		       \frac{(n)_m}{n^m}, \quad \forall F \in \mathcal{I}_m.
		   \]
	       \end{corollary}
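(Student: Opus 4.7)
The plan is to obtain the corollary as a direct consequence of \Cref{thm:main} combined with the simplicial decomposition of $\mathcal{E}_n$ given by \Cref{lem:exchangeable.simplex}. The key observation is that the bound in \eqref{eq:exch.second.2} and the identity \eqref{eq:exch.second} give more than the statement of the theorem: they package the M\"obius coordinates of any $p_n \in \mathcal{E}_n$ as a specific averaging of injective homomorphism densities against a canonical probability measure on isomorphism classes.

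First I would define the mixing weights. By \Cref{lem:exchangeable.simplex}, $p_n \in \mathcal{E}_n$ admits a unique representation $p_n = \sum_{[H] \in \mathcal{U}_n} w_{[H]} p_{n,[H]}$ with $w_{[H]} \geq 0$ and $\sum_{[H]} w_{[H]} = 1$. Equivalently, $w_{U} = \sum_{G \in [U]} p_n(G)$ for each $U \in \mathcal{U}_n$. Uniqueness of the distribution $\{w_U, U \in \mathcal{U}_n\}$ as a function of $p_n$ is therefore immediate from the lemma.

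Next I would apply \Cref{thm:main} in the case where the source distribution is the same $p_n$. Taking $m$ as in the corollary statement and any $F \in \mathcal{I}_m$, \eqref{eq:exch.second} gives $z^m_n(F) = \sum_{G \in \mathcal{L}_n} t_{\mathrm{inj}}(F,G)\, p_n(G)$, and by backward compatibility \eqref{eq:backward.compatibiility} this equals $z_n(F)$. Since $t_{\mathrm{inj}}(F,\cdot)$ is constant on each isomorphism class (it is well-defined on $\mathcal{U}_n$), grouping terms by $[G]$ yields
\[
z_n(F) = \sum_{U \in \mathcal{U}_n} w_U\, t_{\mathrm{inj}}(F,U).
\]

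Finally, I would invoke the bound \eqref{eq:hom.bound} of \Cref{lem:combinatorial} pointwise, together with the monotonicity \eqref{eq:bound} in the proof of \Cref{thm:main}, to get $|t_{\mathrm{inj}}(F,U) - t_{\mathrm{hom}}(F,U)| \leq 1 - (n)_m/n^m$ uniformly in $U$ and in $F \in \mathcal{I}_m$. Averaging this bound against the probability measure $\{w_U\}$ and applying the triangle inequality gives exactly the claimed estimate. There is no real obstacle: the entire content of the corollary is already encoded in \Cref{thm:main}; the only thing to do is to re-index the sum over $\mathcal{L}_n$ as a sum over $\mathcal{U}_n$, which is precisely what the uniqueness part of \Cref{lem:exchangeable.simplex} makes canonical.
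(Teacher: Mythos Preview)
Your proposal is correct and follows essentially the same route as the paper: both use backward compatibility \eqref{eq:backward.compatibiility} to identify $z_n(F)$ with $z_n^m(F)$, invoke the unique simplicial decomposition of \Cref{lem:exchangeable.simplex} to define the weights $w_U$, and then collect the sum over $\mathcal{L}_n$ into one over $\mathcal{U}_n$ using that homomorphism densities are class functions. The only cosmetic difference is that the paper applies \eqref{eq:exch.second.2} directly and then collects terms, whereas you first collect terms in the identity \eqref{eq:exch.second} and then apply \Cref{lem:combinatorial}; the content is identical.
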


	 \begin{proof}
By backward compatibility \eqref{eq:backward.compatibiility}, $z^m_n(F) = z_n(F)$ for all $F \in
	 \mathcal{I}_m$. Furthermore, by \Cref{lem:exchangeable.simplex}, each
	 $p_n \in \mathcal{E}_n$ can be written as
	 \[
	     p_n = \sum_{[G] \in \mathcal{U}_n} p_{n,[G]} w_{[G]},
	 \]
	 for a unique probability distribution   $\{ w_{[G]}, [G] \in
	 \mathcal{U}_n \}$ on $\mathcal{U}_n$, where $p_{n,[G]}$ is the uniform
	 distribution over the class $[G] \in \mathcal{U}_n$. The claim follows
	 from the fact that $t_{\mathrm{hom}}(F,\cdot)$ takes on the same
	 value  $t_{\mathrm{hom}}(F,[G])$ over $[G]$ and collecting terms.
	       \end{proof}

	             By
         \Cref{lem:exchangeable.simplex}, $p_n$ being extremal is equivalent to $p_n$ being a
         uniform distribution over some isomorphism class, say $[G]$ in
         $\mathcal{L}_n$. If  $p_n$ is extremal then $w_{[G]} = 1$.
Since homomorphism densities  are multiplicative, we can use the fact that
$1-{(n)_m}/{n^m} \leq  {m \choose 2}/n$ to conclude that, for $n$
of larger order than $m^2$ and if $p_n$ is an extremal distribution on
$\mathcal{L}_n$,
\begin{equation}\label{eq:approx.dissociatedness}
z_n(F) \approx z_n(F_1)z_n(F_2),
\end{equation}
for each $F \in \mathcal{I}_m$ of the form $F = F_1F_2$, where we recall that $F_1F_2$ is the vertex-disjoint union of $F_1$ and
$F_2$. As we will show in the next section, the approximation in
\Cref{eq:approx.dissociatedness} becomes an equality if $p_n$ is embedded into a
sequence of consistent finitely exchangeable distributions  that
extend to an extremal exchangeable distribution over $\mathcal{L}_\infty$.
Furthermore, all such extremal distributions are defined by these identities.

A result
analogous to \Cref{thm:main}
 holds also for joint
	       probabilities. We have chosen to focus on marginal
	       probabilities since they are more natural in this context, as they directly lead to the key
	       approximation property of
	       \Cref{eq:approx.dissociatedness}.
	       \begin{corollary}\label{cor:main}
		   Consider the setting of \Cref{thm:main}.
	Then, for any $F \in \mathcal{L}_m$,
	\begin{equation}\label{eq:exch.first}
	    p^m_n(F) = \sum_{G \in \mathcal{L}_n}   t_{\mathrm{ind}}(F,G)p_n(G)
	\end{equation}
	    and, as a result,
	\begin{equation}\label{eq:exch.first.2}
	    \max_{ F \in \mathcal{L}_m} \Big| p^m_n(F) - \sum_{G \in \mathcal{L}_n}
	    t_{\mathrm{iso}}(F,G) p_n(G) \Big| \leq 1 - \frac{(n)_m}{n^m}.
	\end{equation}
Furthermore, letting $\tilde{p}^m_n$ the probability distribution on
	$\mathcal{L}_m$ given by
	\[
	    \tilde{p}^m_n(F) = \sum_{G \in \mathcal{L}_n}
	    t_{\mathrm{iso}}(F,G) p_n(G)  , \quad F \in \mathcal{L}_m,
	\]
	we have
	\begin{equation}\label{eq:TV}
	    d_{\mathrm{TV}} \left(   p^m_n , \tilde{p}^m_n \right)  \leq 1 - \frac{(n)_m}{n^m},
	\end{equation}
	where $d_{\mathrm{TV}}(P,Q)$ denotes the total variation distance between the
    probability distributions $P$ and $Q$. 
\end{corollary}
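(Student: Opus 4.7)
The plan is to follow the architecture of the proof of \Cref{thm:main} almost line for line, but with injective isomorphism densities $t_{\mathrm{ind}}$ replacing injective homomorphism densities $t_{\mathrm{inj}}$, and isomorphism densities $t_{\mathrm{iso}}$ replacing homomorphism densities $t_{\mathrm{hom}}$. The enabling input for the first identity \Cref{eq:exch.first} is the second half of \Cref{lem:zn.density}, which states that at each vertex $p_{n,[H]}$ of $\mathcal{E}_n$ one has $p^m_{n,[H]}(F) = t_{\mathrm{ind}}(F,[H])$. By \Cref{lem:exchangeable.simplex} every $p_n \in \mathcal{E}_n$ is a convex combination of these vertices with unique weights $\{w_{[H]}\}$; by linearity of $\Pi^m_n$ the marginal $p^m_n(F)$ is the same convex combination of the $t_{\mathrm{ind}}(F,[H])$. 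Expanding each $t_{\mathrm{ind}}(F,[H])$ as the (constant) common value $\frac{1}{|[H]|}\sum_{G\in[H]} t_{\mathrm{ind}}(F,G)$ and then collecting terms across isomorphism classes gives $p^m_n(F) = \sum_{G \in \mathcal{L}_n} t_{\mathrm{ind}}(F,G) p_n(G)$, which is \Cref{eq:exch.first}.

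For \Cref{eq:exch.first.2} I would substitute \Cref{eq:exch.first} into the difference $p^m_n(F) - \sum_G t_{\mathrm{iso}}(F,G) p_n(G)$, pull the absolute value inside the sum over $G$, and apply the singleton case of \Cref{eq:iso.A.bound} (take $A = \{F\}$) from \Cref{lem:combinatorial} to bound $|t_{\mathrm{ind}}(F,G) - t_{\mathrm{iso}}(F,G)|$ by $1 - (n)_k/n^k$, where $k$ is the number of nodes of $F$ with positive degree. The weights $p_n(G)$ then sum to $1$, and the bound \Cref{eq:bound} from the proof of \Cref{thm:main} promotes $1-(n)_k/n^k$ to the uniform bound $1-(n)_m/n^m$, which survives taking the maximum over $F \in \mathcal{L}_m$.

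For the total variation estimate \Cref{eq:TV}, the natural route is the set-based formulation $d_{\mathrm{TV}}(p^m_n,\tilde p^m_n) = \sup_{A \subseteq \mathcal{L}_m} |p^m_n(A) - \tilde p^m_n(A)|$. Summing \Cref{eq:exch.first} and the definition of $\tilde p^m_n$ over $F \in A$ and using the additive notation \Cref{eq:iso.A}, I would write
\[
p^m_n(A) - \tilde p^m_n(A) \;=\; \sum_{G \in \mathcal{L}_n} \bigl(t_{\mathrm{ind}}(A,G) - t_{\mathrm{iso}}(A,G)\bigr)\, p_n(G).
\]
Applying the set-level bound \Cref{eq:iso.A.bound} of \Cref{lem:combinatorial} uniformly in $A$ and $G$, and then taking the supremum over $A$, produces the claimed bound $1 - (n)_m/n^m$. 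This is the step where it really matters that \Cref{lem:combinatorial} is stated for arbitrary $A \subseteq \mathcal{L}_m$ rather than only singletons; nothing beyond this observation is needed, so no genuine obstacle is expected. The only mild care point is tracking that the bound $1-(n)_k/n^k$ for subgraphs $F$ of size $k \le m$ is dominated by $1-(n)_m/n^m$ via \Cref{eq:bound}, exactly as in the proof of \Cref{thm:main}.
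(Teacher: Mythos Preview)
Your proposal is correct and matches the paper's approach essentially line for line: the paper explicitly omits the proofs of \eqref{eq:exch.first} and \eqref{eq:exch.first.2} as ``nearly identical'' to those of \eqref{eq:exch.second} and \eqref{eq:exch.second.2}, and your derivation of \eqref{eq:TV} via the set-level bound \eqref{eq:iso.A.bound} is exactly what the paper does. One small remark: for $F \in \mathcal{L}_m$ the isomorphism densities $t_{\mathrm{ind}}$ and $t_{\mathrm{iso}}$ are computed over maps from all $m$ nodes (isolated ones included, unlike the homomorphism case), so \Cref{lem:combinatorial} already yields $1-(n)_m/n^m$ directly and the intermediate appeal to \eqref{eq:bound} is unnecessary.
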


    \begin{proof}
	We omit the proofs of \eqref{eq:exch.first} and \eqref{eq:exch.first.2},
	since they are nearly identical to the proofs of  \eqref{eq:exch.second}
	and \eqref{eq:exch.second.2} given above.
	To prove \eqref{eq:TV}, let $A \subset \mathcal{L}_m$ and recall the
	quantities defined in \cref{eq:iso.A}:
	\[
	    t_{\mathrm{ind}}(A,G) = \sum_{F \in A}t_{\mathrm{ind}}(F,G) \quad \text{and}
	    \quad  t_{\mathrm{iso}}(A,G) = \sum_{F \in A}t_{\mathrm{iso}}(F,G).
	\]
	Notice that by \Cref{lem:bound}, $|  t_{\mathrm{ind}}(A,G) - t_{\mathrm{iso}}(A,G)  | \leq 1 -
	\frac{(n)_m}{n^m}$, for any $G \in \mathcal{L}_n$ and $A \subseteq
	\mathcal{L}_m$. Then,
\begin{align*}
    \left| \sum_{F \in A}  p_n^m(F) - \sum_{F \in A}  \tilde{p}_n^m(F) \right| & =
    \left| \sum_{F \in A}  \left( \sum_{G \in \mathcal{G}_n}
t_{\mathrm{ind}}(F,G) p_n(G) \right)  - \sum_{F \in A} \left( \sum_{G \in \mathcal{G}_n}
t_{\mathrm{iso}}(F,G) p_n(G) \right)  \right|\\
& = \left| \sum_{G \in \mathcal{L}_n}  \left( t_{\mathrm{ind}}(A,G) -
t_{\mathrm{iso}}(A,G) \right) p_n(G)   \right| \\
& \leq  \sum_{G \in \mathcal{L}_n}  \left| t_{\mathrm{ind}}(A,G) -
t_{\mathrm{iso}}(A,G) \right| p_n(G)  \\
& \leq 1 - \frac{(n)_m}{n^m}.
    \end{align*}
    Inequality \eqref{eq:TV} now follows since, by definition,
    \[
    	d_{\mathrm{TV}}(p^m_n, \tilde{p}^m_n) = \sup_{A \subseteq \mathcal{L}_m}
    	\left| \sum_{F \in A}  p_n^m(F) - \sum_{F \in A}  \tilde{p}_n^m(F)
    	\right|.
\]This completes the proof.
	 \end{proof}

    Just like in \Cref{cor:finite.ex}, we can equivalently express
    \eqref{eq:exch.first} as
    \[
	    p^m_n(F) = \sum_{U \in \mathcal{U}_n}   t_{\mathrm{ind}}(F,U)w_U,
    \]
    for a probability distribution $\{ w_U, U \in \mathcal{U}_n\}$ that is uniquely dtermined by $p_n$.

\subsection{From finite exchangeability to exchangeability}
		    Below we will strengthen the conclusions of \Cref{thm:main}
		    by further assuming that each $p_n \in \mathcal{E}_n$ is an
		    element of a sequence  $\{ p_n
		    \}_{n=2}^\infty$ of
		    finitely exchangeable distributions   that are
		    consistent, i.e. satisfy  \Cref{eq:consistent}. As noted
		    above, each such
		    sequence extends uniquely to one element in
		     the simplex $\mathcal{E}_\infty$ of
		    exchangeable probability distribution on
		    $\mathcal{L}_\infty$. Below, we will establish a deFinetti
		    type of 
		    theorem for exchangeable distributions and, along the way,
		    relate it to the theory of graph limits. This
		    connection is well known and has been elucidated in
		    \cite{diaconis:janson:08}. 

We begin by introducing a few concepts that are necessary to
represent distributions over infinite graphs and graph sequences. First, it is easy to see that any probability
		distribution on $\mathcal{E}_\infty$ admits a M\"{o}bius
		parametrization that is completely analogous to the one given
		for distributions of finite random graphs and are based on
		marginal probabilities of finite subgraphs without isolated
		nodes.
		   In detail, for a point $p_\infty \in \mathcal{E}_\infty$
		    we will write 
		    \begin{equation}\label{eq:zinfty}
		    	z = z(p_\infty) = (z(F), F \in \mathcal{I})
			\in [0,1]^{\mathcal{I}},
		    \end{equation}
		    for the sequence 
		    of M\"{o}bius parameters given by
		    \begin{equation}\label{eq:z.infty}
			z(F) =  
			\mathbb{P}( F \subset G) \quad
			 F \in \mathcal{I}, 
		    \end{equation}
		    where $G$ is
 an infinite random graph with distribution $p_\infty$ and we recall that
 $\mathcal{I}$ is the set of all finite graphs without isolated nodes. In particular,
	the M\"{o}bius parametrization $\left( z(F), F \in \mathcal{I} \right) \subset
	[0,1]^{\mathcal{I}}$ of an exchangeable
	distribution on $\mathcal{L}_\infty$ satisfies the properties that $z(\emptyset) = 1$,
 $z(F) = z(F')$
	if $F \sim F'$ and \Cref{eq:facet.exch.mobius.simplex} holds for all
	$n$.

We will also require the notion of graph limits: see \cite{lov06},
\cite{borgs_convergent_2008}, \cite{lov12}.
    Following \cite{diaconis:janson:08}, we let $\mathcal{U}_\infty$ be
    the collection of all sequences 
    \begin{equation}\label{eq:zinfty2}
    	(x_F, F \in \mathcal{I}) \in [0,1]^{\mathcal{I}}
    \end{equation}
    of the form
    \[
	x_F = \lim_n t_{\mathrm{hom}}(F,U_n)
    \]
    for some sequence $\{ U_n \}_n$ of unlabeled graphs, with $U_n \in
    \mathcal{U}_n$ for all $n$. The set $\mathcal{U}_\infty$ consists of all
    possible limits of sequences of unlabeled graphs, according to the
    definition of graph limit of \cite{lov06} and
\cite{borgs_convergent_2008}.  Intuitively, one can think of each $U \in
\mathcal{U}_\infty$ as an ``infinite unlabeled graph". Indeed, notice the similarity between \eqref{eq:zinfty2}
and \eqref{eq:zinfty}.
    In order to emphasize the role of density
    homomorphisms in this definition 
    we will
    write  $t_{\mathrm{hom}}(F,U)$ for the element of the sequence $U \in \mathcal{U}_\infty$ corresponding to
     $F \in \mathcal{I}$. Notice that if $F$ and
     $H$ are isomorphic then $t_{\mathrm{hom}}(F,U) = t_{\mathrm{hom}}(H,U)$,
     for all $U \in \mathcal{U}_\infty$.
     The set 
     $\mathcal{U}_\infty$ is a compact subset of the compact metric space
     $[0,1]^\mathcal{I}$ when endowed with the metric
			   \[
			       d\left( x,y \right) = \sum_{i=1}^\infty
			       \frac{1}{2^i}|x_{F_i} - y_{F_i}|,
			       \]
where $F_1,F_2,\ldots$ is an enumeration of all the graphs in
$\mathcal{I}$.

	    The next result provides a representation of the
	    M\"{o}bius parameters of the probability distributions in
	    $\mathcal{E}_\infty$ as expected density homomorphism of
	    all the graphs in $\mathcal{I}$. In addition, the M\"{o}bius parameters of the extremal distributions in
	    $\mathcal{E}_\infty$ satisfy a defining set by polynomial
	    equations given below in \eqref{eq:dissociatedness}.
	\begin{theorem}[{\bf deFinetti Theorem for exchangeable random networks}]
	    \label{thm:definetti}
	    The M\"{o}bius parameters
	    corresponding to  the probability distribution $p_\infty \in \mathcal{E}_\infty$
	    are given by
	    \begin{equation}\label{eq:main2}
    z(F) = \lim_n \mathbb{E} \left[  t_{\mathrm{hom}}(F,G[n]) \right] =  \mathbb{E} \left[  t_{\mathrm{hom}}(F,U)\right] , \quad \forall F \in \mathcal{I}.
\end{equation}
where the expectation is with respect to the distribution of a random variable
$U$ taking values in $\mathcal{U}_\infty$. 
A distribution on $\mathcal{L}_\infty$ is extremal in $\mathcal{E}_\infty$ if and only if its
M\"{o}bius parameters satisfy the conditions
	\begin{equation}\label{eq:dissociatedness}
 	z(F) =
    z(F_1)z(F_2)
\end{equation}
for all $F \in \mathcal{I}$ with $F = F_1F_2$.
Furthermore, there exists one deterministic graph limit $U \in \mathcal{U}_\infty$  such that
	\begin{equation}\label{eq:dissociatedness2}
z(F)   = t_{\mathrm{hom}}(F,U) = \lim_n  t_{\mathrm{hom}}(F,G[n]), \quad \forall F \in \mathcal{I},
\end{equation}
where the limit exists almost surely. 
	\end{theorem}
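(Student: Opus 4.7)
My plan is to combine the finite deFinetti theorem (\Cref{thm:main}) with a reverse-martingale / U-statistic argument for the almost sure convergence of subgraph densities.

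\textbf{Step 1 (first equality in \eqref{eq:main2}).} For any $F \in \mathcal{I}_m$ and any $n \geq m$, set $p_n = \Pi^n_\infty p_\infty$ and let $z_n$ be its M\"obius transform. Backward compatibility \eqref{eq:backward.compatibiility} gives $z(F) = z_n(F)$, so applying \eqref{eq:exch.second.2} to $p_n$ yields
\[
\bigl|z(F) - \mathbb{E}[t_{\mathrm{hom}}(F, G[n])]\bigr| \leq 1 - (n)_m/n^m,
\]
which tends to $0$ as $n \to \infty$.

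\textbf{Step 2 (graph limit $U$ and second equality).} I would construct $U$ as the almost sure limit, in $[0,1]^{\mathcal{I}}$, of the random vectors $(t_{\mathrm{hom}}(F, G[n]))_{F \in \mathcal{I}}$. For each fixed $F$ of order $m$, $t_{\mathrm{inj}}(F, G[n])$ is (after symmetrizing over orderings of each chosen $m$-subset of $[n]$) a U-statistic of order $m$ in the exchangeable adjacency array of $G$, and hence converges almost surely by the reverse-martingale theorem for U-statistics on exchangeable sequences. The uniform bound \eqref{eq:hom.bound} transfers this convergence to $t_{\mathrm{hom}}(F, G[n])$, and countability of $\mathcal{I}$ gives joint almost sure convergence of the whole vector; by the very definition of $\mathcal{U}_\infty$, the limit $U$ takes values there. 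Bounded convergence then gives
\[
\lim_n \mathbb{E}[t_{\mathrm{hom}}(F, G[n])] = \mathbb{E}[t_{\mathrm{hom}}(F, U)], \quad F \in \mathcal{I}.
\]
The chief technical obstacle of the proof lies in this almost sure convergence step, which requires recognizing $t_{\mathrm{inj}}(F, G[n])$ as a U-statistic in the exchangeable adjacency array and invoking the reverse-martingale convergence theorem.

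\textbf{Step 3 (extremality $\Leftrightarrow$ \eqref{eq:dissociatedness}).} If $p_\infty$ is extremal, the exchangeable $\sigma$-algebra of $G$ is $p_\infty$-trivial by Hewitt--Savage, so the almost sure limits constructed in Step 2 are $p_\infty$-a.s.\ constant; this gives a deterministic $U$ with $F$-coordinate $z(F)$, i.e.\ \eqref{eq:dissociatedness2}, and passing \eqref{eq:multiplicative} to the limit in $n$ yields \eqref{eq:dissociatedness}. Conversely, suppose $z$ satisfies \eqref{eq:dissociatedness}. For each $F \in \mathcal{I}$, choose a node-disjoint isomorphic copy $F'$ of $F$, so that $FF' \in \mathcal{I}$; then multiplicativity \eqref{eq:multiplicative}, together with the first part of the theorem and \eqref{eq:dissociatedness}, gives
\[
\mathbb{E}[t_{\mathrm{hom}}(F, U)^2] = \mathbb{E}[t_{\mathrm{hom}}(FF', U)] = z(FF') = z(F)\,z(F') = \bigl(\mathbb{E}[t_{\mathrm{hom}}(F, U)]\bigr)^2,
\]
so $t_{\mathrm{hom}}(F, U) = z(F)$ almost surely for every $F \in \mathcal{I}$. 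Since the coordinate maps $U \mapsto t_{\mathrm{hom}}(F, U)$, $F \in \mathcal{I}$, separate points of $\mathcal{U}_\infty$ by construction, $U$ is almost surely a single deterministic element of $\mathcal{U}_\infty$, so $p_\infty$ is extremal.
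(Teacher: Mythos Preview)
Your proof is correct but takes a genuinely different route from the paper's. Step~1 is identical. For Step~2 the paper does not prove almost sure convergence of $t_{\mathrm{hom}}(F,G[n])$ in general; it instead invokes Theorem~3.1 of \cite{diaconis:janson:08} (with weak-convergence/compactness on $\overline{\mathcal U}$ mentioned as an alternative) to obtain a random $U\in\mathcal U_\infty$ with $\mathbb E[t_{\mathrm{hom}}(F,U)]=\lim_n\mathbb E[t_{\mathrm{hom}}(F,G[n])]$, and only recovers almost sure convergence in the extremal case at the very end. Your reverse-martingale argument (which is valid: with $\mathcal F_{n}$ the $\sigma$-field of events invariant under permutations of $[n]$, the averaging identity $\frac{1}{n+1}\sum_{j}t_{\mathrm{inj}}(F,G[[n{+}1]\setminus\{j\}])=t_{\mathrm{inj}}(F,G[n{+}1])$ gives $\mathbb E[t_{\mathrm{inj}}(F,G[n])\mid\mathcal F_{n+1}]=t_{\mathrm{inj}}(F,G[n{+}1])$) yields almost sure convergence for \emph{every} $p_\infty\in\mathcal E_\infty$, which is stronger and avoids the black-box citation. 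For Step~3, the paper relies in both directions on \cite{diaconis:janson:08}, Corollary~5.4 (the bijection between $\mathcal U_\infty$ and the extreme points of $\mathcal E_\infty$, stated in the paper as \Cref{lem:extreme}); your forward direction instead uses the ergodic-theoretic fact that extremal invariant measures have trivial invariant $\sigma$-field (your label ``Hewitt--Savage'' is a slight misnomer here, but the statement is standard), and your converse uses the same variance computation as the paper but then closes without \Cref{lem:extreme}, arguing directly that a deterministic $U$ forces $p_\infty$ to be extremal. That last implication deserves one more line: if $p_\infty=\alpha p_1+(1-\alpha)p_2$ with $p_i\in\mathcal E_\infty$, then $p_i\ll p_\infty$ so the same almost sure limit $U$ exists under each $p_i$ and equals the common constant, whence $z_{p_1}=z_{p_2}$ and thus $p_1=p_2$. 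In sum, your argument is more self-contained and yields a slightly stronger intermediate conclusion, while the paper's argument ties the result explicitly to the graph-limit machinery of \cite{diaconis:janson:08}.
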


    \begin{proof}
		   Let $G \in
		   \mathcal{L}_\infty$ be an exchangeable infinite labeled
		   graph. Then, 
		   for
		   a fixed  $m \geq 2$ and each  $n  \geq m$, \Cref{thm:main} yields that
		   \begin{equation}\label{eq:squeeze}
		       \Big| \mathbb{P} \left( G[m]\supset F \right) -
		       \mathbb{E}\left[ t_{\mathrm{hom}}(F,G[n]) \right] \Big| \leq 1
		    - \frac{(n)_m}{n^m}, \quad \forall F  \in \mathcal{I}_m.
		       \end{equation}

		     Let $\{ U_n\}_n \subset \mathcal{U}$ be a sequence of random unlabeled graphs such that, for each $n$, $U_n$ represents the isomorphism class of $G[n]$. Then, for each $n$, $t_{\mathrm{hom}}(F,G[n]) \stackrel{d}{=} t_{\mathrm{hom}}(F,U_n)$, where we recall that ``$\stackrel{d}{=}$'' denotes identity in distribution, and, as result  $\mathbb{E}\left[ t_{\mathrm{hom}}(F,G[n]) \right] = \mathbb{E}\left[ t_{\mathrm{hom}}(F,U_n) \right] $.
		       Taking the limit in $n$, \eqref{eq:squeeze} implies that
		       \[
 \mathbb{P} \left( G[m] \supset F \right) = \lim_n \mathbb{E}\left[ t_{\mathrm{hom}}(F,U_n) \right],
			   \]
			   for all $F \in \mathcal{I}_m$ and all $m \in
			   \mathbb{N}$.
			   Thus, by Theorem 3.1 in \cite{diaconis:janson:08}, there exists a random element $U \in \mathcal{U}_\infty$ such that 
			   \[
			       \mathbb{P} \left( G \supset F \right) =
			       \mathbb{E}\left[ t_{\mathrm{hom}}(F,U) \right], \quad \forall F \in
		       \mathcal{I},
			       \]
			       where $\mathbb{E}\left[ t_{\mathrm{hom}}(F,U) \right] = \lim_n \mathbb{E}\left[ t_{\mathrm{hom}}(F,U_n) \right] = \lim_n \mathbb{E}\left[ t_{\mathrm{hom}}(F,G[n]) \right]$.
			       Thus,  \Cref{eq:main2} is proved. 

			       \noindent {\bf Remarks} 
\begin{enumerate}
\item In fact, in the notation, of \cite{diaconis:janson:08}  $U_n$ converges in distribution to $U$, both viewed as elements of the space $\overline{\mathcal{U}}$, and, by Theorem 5.3 therein, such a random $U$ is unique.
\item Furthermore, invoking again Theorem 3.1 in \cite{diaconis:janson:08}, we can conclude  that $t_{\mathrm{hom}}(F,G[n])$ converges in distribution for each $F \in \mathcal{I}$.
\item Alternatively, we may prove the claim using standard arguments from the theory of weak convergence of probability measures; see, e.g., the proof of Theorem 4 in \cite{dia77}. Indeed, for each $n$ we  let $\mu_n$ be the probability distribution of $U_n$ defined over the compact metric space $\overline{\mathcal{U}}$. Then, there exists a subsequence $\{ \mu_{n_i} \}_i$ that converges weakly to a probability  measure $\mu$ over the same space, which we may define to be the distribution of $U$. Since, for each fixed $F \in \mathcal{I}$, $t_{\mathrm{hom}}(F, \cdot)$ is a bounded and continuous function over $\overline{\mathcal{U}}$ \citep[see, e.g.,][]{lov12} the result follows.   

\end{enumerate}

     To show  \eqref{eq:dissociatedness}, we will rely on the following result of \cite{diaconis:janson:08}.
\begin{lemma}[\cite{diaconis:janson:08}, Corollary 5.4]\label{lem:extreme}
    There is a one-to-one correspondence between the extreme points of the set
    $\mathcal{E}_\infty$ and the set $\mathcal{U}_\infty$, given by
    \begin{equation}\label{eq:eq:extreme}
	t_{\mathrm{hom}}(F,U) = z(F), \quad \forall F \in \mathcal{I},
    \end{equation}
    where $U
    \in \mathcal{U}_\infty$ and  $z(F)$ is the value of M\"{o}bius parameter at
    $F$ for the
    corresponding $p_\infty$ (see
    \ref{eq:z.infty}).
\end{lemma}

We can now prove \eqref{eq:dissociatedness}.
Assume  that  $p_\infty$ is extremal in $\mathcal{E}_\infty$ with M\"{o}bius parameters $\{z(F), F \in
    \mathcal{I}\}$.
Let $U \in \mathcal{U}_\infty$ its corresponding
sequence. Then there exists a  sequence $\{ U_n \}_n
\subset \mathcal{U}$ of unlabeled graphs  with $U_n \in \mathcal{U}_n$ for all
$n$ such that $t(F,U) = \lim_n t_{\mathrm{hom}}(F,U_n)$, for any $F \in \mathcal{I}$. 
Consider any pair of node disjoint graphs $F_1$ and $F_2$ in
    $\mathcal{I}$. Without loss of generality, we may take the nodesets of $F_1$
    and $F_2$ to be $[m_1]$ and $\{m_1+1,\ldots,m_1 + m_2\}$, respectively.
Using \cref{lem:extreme},
\[
z(F_1F_2) = t(F_1,F_2,U) =  \lim_n t_{\mathrm{hom}}(F_1F_2,U_n) = \lim_n t_{\mathrm{hom}}(F_1,U_n)t_{\mathrm{hom}}(F_2,U_n) = z(F_1)z(F_2),
\]  
where the third identity follows from the multiplicative property of density
homomorphisms, which holds for all $n \geq m_1 +
m_2$; see \eqref{eq:multiplicative}.
The same argument applies to any pair of node-disjoint graphs $F_1$ and $F_2$
in $\mathcal{I}$, and  \eqref{eq:dissociatedness} follows.
Now suppose that \eqref{eq:dissociatedness} holds. Using  \Cref{eq:main2}, for
any pair of node-disjoint isomorphic graphs $F_1$ and $F_2$  in $\mathcal{I}$,
\[
z(F_1F_2) =  \mathbb{E}\left[ t_{\mathrm{hom}}(F_1F_2,U) \right] = \mathbb{E}\left[ t_{\mathrm{hom}}^2(F_1,
U) \right],
\]
where $U$ is the random element in $\mathcal{U}_\infty$ corresponding to the distribution
$p_\infty$.
Using \eqref{eq:dissociatedness} and the fact that $z(F_1) = z(F_2)$, we have that
\[
    \mathbb{E} \left[  t_{\mathrm{hom}}^2(F_1,U) \right] = z(F_1F_2) =
     z(F_1)z(F_2) = z^2(F_1) = \left( \mathbb{E}\left[ t_{\mathrm{hom}}(F_1,U) \right]
     \right)^2,
\]
and, therefore, that $t(F_1,U)$ is almost surely constant. Since the choice of
$F_1$ is arbitrary, we conclude that the random variable $t_{\mathrm{hom}}(F,U)$ is almost surely constant for each $F \in
\mathcal{I}$ and therefore, by definition, that $U$ is non random.  It then follows from
\Cref{lem:extreme} that the distribution of $G$ is extremal. 
Finally, since $t_{\mathrm{hom}}(F,G[n]) \stackrel{d}{=} t_{\mathrm{hom}}(F,U_n)$ for all $n$ and $F \in \mathcal{I}$ and $\{ U_n\}$ is a non-random sequence of graphs in $\mathcal{U}$ with graph limit $U$, $t_{\mathrm{hom}}(F,G[n]$ converges almost surely to $t_{\mathrm{hom}}(F,U)$. 
\end{proof}

\Cref{thm:definetti} gives a reformulation of well known results about symmetric
	binary
	exchangeable arrays \citep[see, e.g.][]{aldous:81,aldous:85,lauritzen:08,silverman:76,eagleson:weber:78,hoover:79,kallenberg:05}  and can also be directly
	linked to the theory of graph  limits, as shown in particular by
	\cite{diaconis:janson:08} \citep[see also][Chapter 11]{lov12}. Our contribution is
	a relatively simple proof that combines the finite exchangeability
	bound from \Cref{thm:main} with
	classic arguments from the theory of weak convergence of measure as
	detailed in
	\cite{diaconis:janson:08}.

The identity \eqref{eq:main2} signifies that the M\"{o}bius parameters
	of any $p_\infty \in \mathcal{E}_\infty$ can be expressed as an average
    of density homomorphisms over graph limits, while \Cref{eq:dissociatedness2}
    expresses the result that there is a one-to-one correspondence between graph
    limits and extremal distributions in $\mathcal{E}_\infty$ (this formally
    stated in \Cref{lem:extreme} above).

The proof of the Theorem also reveals that if $G$ is a random graph in $\mathcal{L}_\infty$ with an exchangeable distribution, then, as $n \rightarrow \infty$ and for each $F \in \mathcal{I}$, the sequence $t_{\mathrm{hom}}(F,G[n])$ converges in distribution; it also converges almost surely if and only if the distribution of $G$ is extremal in $\mathcal{E}_\infty$. Furthermore, we see that 
\[
z(F) = \lim_n \mathbb{E}[ t_{\mathrm{hom}}(F,G[n])], \quad F \in \mathcal{I}.
\]
 Of course, a priory, for any infinite (random or deterministic) graph in $\mathcal{L}_\infty$, the limit $\lim_n t_{\mathrm{hom}}(F,G[n])$ needs not exist.

An equivalent version of \Cref{thm:definetti}  can also be given for probability
parameters as opposed to M\"{o}bius parameters. 
However,
the parametrization of extremal distributions in
$\mathcal{E}_\infty$ by the
induced probabilities of finite graphs does not seem to satisfy
any factorization properties, such as the ones expressed in
\eqref{eq:dissociatedness} for the M\"{o}bius parameters. For this reason, we
find the
M\"{o}bius parametrization  more convenient. We refrain from providing the
details.

One of the main implications of \Cref{thm:definetti} is that, for any integer $n
\geq 2$,   if
$p_n \in \mathcal{E}_n$ is the marginal of an extremal exchangeable distribution on $\mathcal{L}_\infty$, then, by
\Cref{eq:dissociatedness},
its M\"{o}bius parameters (marginal probabilities) satisfy  the
identities
\begin{equation}\label{eq:dissociatedness.n}
z_n(F_1F_2) = z_n(F_1)z_n(F_2), \quad \forall F_1,F_2 \in \mathcal{I}_n,
\end{equation}
 i.e. the approximation \Cref{eq:approx.dissociatedness} holds exactly.
This property holding
 for all $n$ is equivalent to a well-known
 measure-theoretical property of exchangeable distributions over binary arrays,
 known as {\it dissociatedness}; see, e.g., \cite{silverman:76}. In fact,
 dissociatedness is a necessary and sufficient condition for an exchangeable
 distribution over arrays to be extremal \citep{aldous:85}. In the graph limit literature
 \citep[see, e.g.,][Chapter 11]{lov12} an equivalent formulation of
 \Cref{eq:dissociatedness.n} for all $n$ is referred to as  the {\it local}
 property of the associated sequence of distributions.

We will refer to distributions in $\mathcal{E}_n$ satisfying
 \Cref{eq:dissociatedness.n} as {\bf dissociated.} Notice that, according to
 our definition, a dissociated distribution in $\mathcal{E}_n$ needs not be the
 marginal of any dissociated or even finitely exchangeable distributions over
 larger graphs. This is in
 contrast with the classic notion of dissociatedness used in the probabilistic
 literature, which requires
 \Cref{eq:dissociatedness.n} to hold for all $n$ and therefore applies to all the
 marginals of an exchangeable distribution.  Indeed, as we will show below, there exist dissociated
 distributions in $\mathcal{E}_n$, for all $n \geq 4$, that cannot be extended to
 to any distributions on larger graphs.

Since dissociated
distributions in $\mathcal{E}_n$ contain the
M\"{o}bius parameters of the marginals of all extremal distributions in
$\mathcal{E}_\infty$, in order to understand the subset of $\mathcal{E}_n$
corresponding to  image under the marginal mapping of all exchangeable distributions
it is crucial to study dissociated distributions,
which we do next in the next Section.

\subsubsection{Connection with harmonic analysis}\label{sec:harmonic}

There is an interesting connection between \Cref{thm:definetti} and harmonic analysis on semigroups \citep{berg:etal:84,ressel:08}. More precisely, if we consider the semigroup $(\mathcal{J},+)$ of unlabeled graphs without isolated nodes, where $+$ denotes node disjoint union, the M\"obius parameters clearly satisfy
\[z(F)=\phi([F]),\]
for some function $\phi:\mathcal{J}\to \mathbb{R}_+$. A is shown in the lemma below, the function $\phi$ is \textbf{positive definite} on $(\mathcal{J},+)$, meaning that any matrix of the form
\[m_{ij}=\phi([F_i]+[F_j]),\quad  i,j=1,\ldots, n\] is positive semidefinite. 
\begin{lemma}\label{lem:positivity}Let $G$ be a random exchangeable graph with M\"obius parameters $z$ given as above. Then the function $\phi$ is bounded and positive definite on $(\mathcal{J},+)$.
\end{lemma}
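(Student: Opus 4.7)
The plan is to derive both claims directly from the deFinetti representation in \Cref{thm:definetti} combined with the multiplicativity of homomorphism densities \eqref{eq:multiplicative}.

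First, boundedness is immediate: since $z(F) = \mathbb{P}(F \subseteq G)$ is a probability, $\phi([F]) = z(F) \in [0,1]$ for every $[F] \in \mathcal{J}$, so $\phi$ is bounded by $1$.

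For positive definiteness, the key observation is that the node-disjoint union of graphs interacts nicely with homomorphism densities. By \Cref{thm:definetti}, there exists a random graph limit $U$ taking values in $\mathcal{U}_\infty$ such that, for every $F \in \mathcal{I}$,
\[
z(F) = \mathbb{E}[t_{\mathrm{hom}}(F,U)].
\]
Applying this to a node-disjoint union $F_i F_j$ (which corresponds to $[F_i] + [F_j]$ in $(\mathcal{J},+)$) and invoking the multiplicative property \eqref{eq:multiplicative}, which passes to the limit since each $t_{\mathrm{hom}}(\cdot, U)$ is a pointwise limit of multiplicative quantities, I get
\[
\phi([F_i] + [F_j]) = z(F_i F_j) = \mathbb{E}\bigl[t_{\mathrm{hom}}(F_i,U)\, t_{\mathrm{hom}}(F_j,U)\bigr].
\]

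Given this identity, positive definiteness follows from a single line. For any finite sequence $[F_1], \ldots, [F_n] \in \mathcal{J}$ and any real vector $c = (c_1, \ldots, c_n)$,
\[
\sum_{i,j=1}^n c_i c_j\, \phi([F_i] + [F_j]) = \mathbb{E}\!\left[\Bigl(\sum_{i=1}^n c_i\, t_{\mathrm{hom}}(F_i,U)\Bigr)^{\!2}\right] \geq 0,
\]
so the matrix $(\phi([F_i]+[F_j]))_{i,j}$ is positive semidefinite, which is exactly the definition of positive definiteness on the commutative semigroup $(\mathcal{J},+)$ (with trivial involution).

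The only subtlety I anticipate is justifying that the multiplicativity of $t_{\mathrm{hom}}$ extends from finite unlabeled graphs to elements of $\mathcal{U}_\infty$; this is standard since each $t_{\mathrm{hom}}(F,U)$ is defined as a limit of $t_{\mathrm{hom}}(F,U_n)$ along a sequence $\{U_n\}$ of unlabeled graphs, and the identity $t_{\mathrm{hom}}(F_1 F_2,U_n) = t_{\mathrm{hom}}(F_1,U_n)\, t_{\mathrm{hom}}(F_2,U_n)$ holds for all large enough $n$, hence passes to the limit. No real obstacle arises beyond applying the deFinetti representation already proved.
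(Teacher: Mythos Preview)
Your proof is correct, but it takes a different route from the paper's. The paper works directly with the edge-indicator variables $X_{ij}$ of the random graph $G$: writing $z(F)=\E\bigl[\prod_{i\sim j\in F}X_{ij}\bigr]$ and using exchangeability to replace each $F_v$ by a node-disjoint copy $F_v^*$, it expresses the quadratic form as $\E\bigl[\bigl(\sum_u c_u\prod_{i\sim j\in F_u}X_{ij}\bigr)^2\bigr]$. That argument uses only exchangeability, not \Cref{thm:definetti}. You instead invoke the deFinetti representation $z(F)=\E[t_{\mathrm{hom}}(F,U)]$ and multiplicativity of homomorphism densities, arriving at the square $\E\bigl[\bigl(\sum_i c_i\,t_{\mathrm{hom}}(F_i,U)\bigr)^2\bigr]$.

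The tradeoff is conceptual. Your route is quick and clean once \Cref{thm:definetti} is available, but it makes the lemma logically downstream of the integral representation. The paper's more elementary argument matters because the surrounding discussion positions positive definiteness on $(\mathcal{J},+)$ as an \emph{alternative} entry point to the same structural conclusions (via the result of Berg, Christensen and Ressel that bounded positive definite functions on an Abelian semigroup form a Bauer simplex with characters as extreme points); establishing the lemma without appealing to \Cref{thm:definetti} keeps that alternative genuinely independent rather than circular.
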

\begin{proof}Clearly $\phi(\emptyset)=1$ and $\phi$ is bounded. Introduce the binary random variables $X_{ij}$ for $i\neq j\in \N$ where
$X_{ij}=1$  if $i\sim j$ in $G$ and $X_{ij}=0$ otherwise; $X$ is the (random) adjacency matrix of $G$.  Then, clearly
\[z(F)= \E\left(\prod_{ij: i\sim j \in F}X_{ij}\right).\]
So elementary calculations will verify that
\begin{eqnarray*}\sum_{u,v=1}^n c_uc_v\phi([F_u]+[F_v])&=&
\sum_{u,v=1}^n c_uc_v\E\left(\prod_{i\sim j\in F_u}X_{ij}\prod_{i\sim j\in F^*_v}X_{ij}\right)\\&=&
\E\left\{\sum_{u}c_u\prod_{i\sim j\in F_u}X_{ij}\right\}^2\geq 0
\end{eqnarray*}
where $F^*_v$ is a copy of $F_v$ which is node-disjoint from $F_u$.
This completes the proof.
\end{proof}
We note that the property in Lemma~\ref{lem:positivity} is referred to as \textbf{reflection positivity} in \cite{lov06}.

Now \cite{berg:etal:76} show that the set of bounded positive definite functions  on an Abelian semigroup is a Bauer simplex with the set of \textbf{characters} (multiplicative functions) as extreme points; this is essentially equivalent to the statement in \Cref{thm:definetti}.

\subsubsection{Connection with graphons}
The conclusions of \Cref{thm:main} can be equivalently expressed using graphons.
Indeed, paraphrasing a deep result about exchangeable arrays established by
\cite{aldous:81} and \cite{hoover:79} \citep[see also][]{kallenberg:05}, the
M\"{o}bius sequence $(z(F), F \in \mathcal{I})$ corresponding to an
extremal exchangeable distribution admits the representation
\[
    z(F) = \int_{[0,1]}  \int_{[0,1]^n} \prod_{(j,j) \in E(F)} W(x_i,x_j) d x_1 \ldots d x_n,
\]
for all $F \in \mathcal{I}_n$ and all $n \geq 2$, and some symmetric measurable
function $W \colon [0,1]^2
\rightarrow [0,1]$ (which is not uniquely defined). The same result was also
established in the context of graph limits by
\cite{borgs_convergent_2008} and \cite{lov06}, who termed the function $W$
 a graphon.
Furthermore, equation \eqref{eq:main2} takes the form
\[
    z(F) = \int_{[0,1]} \Big\{ \int_{[0,1]^n} \prod_{(j,j) \in E(F)} \phi(\alpha,
x_i,x_j) d x_1 \ldots d x_n \Big\} d \alpha,
\]
for all $F \in \mathcal{I}_n$ and all $n \geq 2$, for some measurable function
$\phi \colon [0,1]^3
\rightarrow [0,1]$ (not necessarily uniquely defined), symmetric in its last two
arguments. See, e.g., Chapter 14 in \cite{aldous:81}.

 \section{The manifold of dissociated exchangeable distributions}
 \label{sec:manifold}


 Let $\mathcal{D}_n \subset \mathcal{E}^M_n$ be the set of M\"{o}{bius parameters
     of finitely
	exchangeable dissociated distributions on $\mathcal{L}_n$. By definition,
	$\mathcal{D}_n$ is comprised of all the points in
	$\mathcal{E}^M_n$ that satisfy the system of polynomial equations
	\eqref{eq:dissociatedness.n}. Therefore, $\mathcal{D}_n$ is the intersection of
	$\mathcal{E}^M_n$ with a smooth manifold, in fact an affine variety in
	$(z(F), F \in \mathcal{I}_n)$.  For this reason, we will refer
	to $\mathcal{D}_n$ as the {\bf dissociated manifold}.

	Clearly, the image of $\mathcal{D}_n$ under the inverse M\"{o}bius
	transform is a subset of $\mathcal{E}_n$ that can be also defined by
	a system of polynomial equations in the probability parameters, though
	these relations are not as simple as the ones in \eqref{eq:dissociatedness.n}.

	The next result describes some
	of the properties of the set $\mathcal{D}_n$. In particular,  it shows
	that
 if $p_n$ is the marginal of an exchangeable, non-extremal
 distribution on $\mathcal{L}_\infty$, then its M\"{o}bius parameters (marginal
 probabilities) are mixtures of the M\"{o}bius parameters of dissociated
 distributions in $\mathcal{E}_n$. From this, we obtain a partial geometric
 characterization of the set
 $\Pi^n_\infty(\mathcal{E}_\infty)$. It should be apparent now why M\"{o}bius
 parameters (marginal probabilities) are better suited to describe
 exchangeability in our context. Recall that $M_n$ denotes the M\''o bius map defined in (\ref{eq:M}).

 \begin{lemma}\label{lem:Dn}
	   The dimension of $ \mathcal{D}_n$ is the number of unlabeled connected
	   graphs with at most $n$ nodes. If $p_\infty$ is a
	   distribution in $\mathcal{E}_\infty$ and $z = z(p_\infty)$ is as in
	   \Cref{eq:z.infty}, then
	   \[
z_n \in \left\{
		   \begin{array}{ll}
		       \mathcal{D}_n &  \text{if } p_\infty \text{ is
		       extremal}\\
		       \mathrm{convhull}(\mathcal{D}_n)  &  \text{otherwise,}
		   \end{array}
		   \right.
	   \]
	   where $z_n = ( z(F), F \in \mathcal{I}_n)$.
	   As a result, for each $n \geq 2$,
	   \[
	       P^n_\infty(\mathcal{E}_\infty) \subset
	       \left\{ M_n^{-1} z_n \colon z_n  \in
	       \mathrm{convhull}(\mathcal{D}_n)\right\}.
	   \]
	\end{lemma}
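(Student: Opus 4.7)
The plan is to address the three assertions in sequence, using the dissociation equations \eqref{eq:dissociatedness.n}, the characterization of extremal elements of $\mathcal{E}_\infty$ from \Cref{thm:definetti}, and the fact that $\mathcal{E}_\infty$ is a Bauer simplex.

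For the dimension statement, I would first derive the upper bound combinatorially. Every $F \in \mathcal{I}_n$ has a unique decomposition $F = C_1 \cdots C_k$ into connected components, each with at least two nodes since $F$ has no isolated nodes. Iterating \eqref{eq:dissociatedness.n} gives $z(F) = \prod_{i=1}^k z(C_i)$, and exchangeability forces $z$ to depend only on isomorphism classes. Hence every $z \in \mathcal{D}_n$ is determined by the values $z([C])$ as $[C]$ ranges over unlabeled connected graphs with at most $n$ nodes, bounding $\dim \mathcal{D}_n$ above by this count. For the matching lower bound, I would linearize \eqref{eq:dissociatedness.n} at a point with strictly positive coordinates lying in the relative interior of $\mathcal{E}^M_n$ --- for instance, the M\"obius parameters $z(F) = q^{|E(F)|}$ of the Erd\H{o}s--R\'enyi distribution on $\mathcal{L}_n$ with $q \in (0,1)$, which lies in the interior of $\mathcal{E}_n$ because it places positive mass on every labeled graph while being constant on isomorphism classes. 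Passing to logarithmic coordinates turns \eqref{eq:dissociatedness.n} into the linear relations $\log z(F_1 F_2) = \log z(F_1) + \log z(F_2)$, so the tangent space to $\mathcal{D}_n$ at that point is freely parametrized by $\{\log z([C])\}$ over connected $[C]$, matching the upper bound.

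The two containment statements follow more directly from \Cref{thm:definetti}. If $p_\infty \in \mathcal{E}_\infty$ is extremal, then \eqref{eq:dissociatedness} yields $z(F_1 F_2) = z(F_1) z(F_2)$ for every pair of node-disjoint graphs in $\mathcal{I}$, and restricting to $\mathcal{I}_n$ produces \eqref{eq:dissociatedness.n}. By the backward compatibility relation \eqref{eq:backward.compatibiility}, the truncated vector $z_n$ is the M\"obius transform of the marginal $\Pi^n_\infty(p_\infty) \in \mathcal{E}_n$, so $z_n \in \mathcal{E}^M_n$ and hence $z_n \in \mathcal{D}_n$. For a general $p_\infty \in \mathcal{E}_\infty$, I would invoke the Bauer simplex structure of $\mathcal{E}_\infty$ (noted after \eqref{eq:zinfty} and revisited in \Cref{sec:harmonic}) to write $p_\infty = \int p^{(\alpha)}_\infty \, \mu(d\alpha)$ as a mixture of extremal distributions. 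Since both $\Pi^n_\infty$ and $M_n$ are linear, the induced vector satisfies $z_n = \int z_n\bigl(p^{(\alpha)}_\infty\bigr) \, \mu(d\alpha)$, exhibiting $z_n$ as a convex mixture of elements of $\mathcal{D}_n$. Applying $M_n^{-1}$ then gives the final containment $\Pi^n_\infty(\mathcal{E}_\infty) \subset \{M_n^{-1} z_n \colon z_n \in \mathrm{convhull}(\mathcal{D}_n)\}$.

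The main obstacle is the dimension lower bound. The factorization argument alone is purely combinatorial and only supplies an upper bound; the technical point is to verify that the connected-graph values can be perturbed independently while keeping $z$ inside the M\"obius simplex $\mathcal{E}^M_n$, i.e., corresponding to a bona fide exchangeable probability distribution. The Erd\H{o}s--R\'enyi linearization addresses this, but requires noting that the M\"obius image of the interior of $\mathcal{E}_n$ is a full-dimensional open subset of the affine hull of $\mathcal{E}^M_n$, which follows from $M_n$ being a linear isomorphism between the two simplices. An alternative that sidesteps the log-linearization entirely is to use \Cref{thm:definetti} with a sufficiently rich graphon family (for example, $k$-block graphons) to directly realize arbitrary independent perturbations of the connected-graph parameters.
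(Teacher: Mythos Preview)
Your proposal is correct and follows essentially the same line as the paper. The only notable differences are in packaging. For the containment statements, the paper is slightly more direct: instead of invoking the Bauer simplex decomposition of $p_\infty$ and pushing it forward through $\Pi^n_\infty$ and $M_n$, it applies \eqref{eq:main2} immediately to write $z_n(F) = \mathbb{E}[t_{\mathrm{hom}}(F,U)]$ for a random $U \in \mathcal{U}_\infty$, and then notes that for each deterministic $U$ the vector $(t_{\mathrm{hom}}(F,U))_{F \in \mathcal{I}_n}$ lies in $\mathcal{D}_n$ by multiplicativity of homomorphism densities. This is the same mixture you construct, reached one step sooner. For the dimension claim the paper is terser than you are: it simply says the dimension follows from counting the polynomial constraints in \eqref{eq:dissociatedness.n} modulo the exchangeability identifications, citing \cite{drt08} for the relevant calculation, whereas your Erd\H{o}s--R\'enyi linearization actually supplies the lower-bound argument the paper defers to that reference.
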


    \begin{proof}
	    That claim about the dimension of $\mathcal{D}_n$ follows from counting the number of
	   polynomial equations in \eqref{eq:dissociatedness.n} (see, e.g.,
	   \cite{drt08} and references therein for a similar calculation) and taking into
	   account the fact that $z_n(F) = z_n(F')$ for all $F \sim F'$.
	   To show the second statement, let $p_\infty \in \mathcal{E}_\infty$
	   and $z = z(p_\infty)$ be as in
	   \Cref{eq:z.infty}. Then, by \eqref{eq:main2},
	   \[
	       z_n(F) = z(F) = \mathbb{E}\left[   t_{\mathrm{hom}}(F,U)\right],
\quad
\forall F \in \mathcal{I}_n,
	   \]
	   where in the above display $U$ is a random variable taking values in $\mathcal{U}_\infty$.
	   The claim is then established after noting  that for any deterministic
	   $U \in \mathcal{U}_\infty$, it holds that $t_{\mathrm{hom}}(F,U) \in \mathcal{D}_n$ for
	   all $F \in \mathcal{I}_n$,  by
	   the second part of \Cref{thm:definetti}.
	\end{proof}

\Cref{lem:Dn} should be compared with the results  in  \cite{ELV:79}, where
it is shown, with a different language, that the set
of M\"{o}bius parameters in $\mathcal{E}^M_n$ arising from extremal
distributions in
$\mathcal{E}_\infty$ belongs to $\mathcal{D}_n$ and has a non-empty interior, of dimension equal to
the number of unlabeled connected graphs with at most $n$ nodes. The
authors further
remark that not much else is known about this set for any $n \geq 3$, including its topological
properties (though they  do show that it is path-connected).

In light of this, one may be led to conjecture that each point in the dissociated manifold
$\mathcal{D}_n$ arises as the M\"{o}bius parameter of an extremal exchangeable
distribution on $\mathcal{E}_\infty$. However, quite surprisingly, this is not
the case. The following example provides a family of strictly positive
probability distributions in $\mathcal{D}_4$ that are not extendable.
Geometrically, this set is a line segment in $\mathcal{D}_4$. Other examples of
dissociated distributions with zero entries in $\mathcal{D}_4$
that are not extendable are given in tables \ref{tab:zu} and \ref{tab:pu}, which
we discuss in the next section.

\begin{example}
Take $n=4$.  Consider the (strictly positive) probability
 distribution $p^* = \alpha p_1 + (1-\alpha) p_2$ on $\mathcal{L}_4$, where $\alpha \in (0,1)$,
 $p_1$ is the Erd\"{o}s-Renyi distribution on $\mathcal{L}_4$ with
 $p=1/2$, and $p_2$ is the distribution on  $\mathcal{L}_4$ corresponding to the mixture of the point
 mass at the empty graph, the uniform
 distribution over the $4$ graphs isomorphic to the union of a triangle and an
 isolated node, and the uniform distribution over the $3$ graphs isomorphic to the
 4-cycle, with weights $1/8$, $1/2$ and $3/8$ respectively. Both $p_1$ and $p_2$
 are finitely exchangeable, and, therefore, so is $p^*$. Furthermore,  since
 under both $p_1$ and $p_2$ the M\"{o}bius parameters corresponding to graphs
 isomorphic to an
 edge  and to the disjoint union of two edges are $1/2$ and $1/4$
 respectively,  $p^* \in
 \mathcal{D}_n$ for each $\alpha \in (0,1)$. Yet, as $p_2$
 is not contained in the image of the marginal mapping $\Pi^4_5$ over $\mathcal{L}_5$, there
 is no distribution on $\mathcal{L}_n$, $n \geq 5$,  whose marginal in
 $\mathcal{L}_4$  is $p^*$.
 Therefore, $z(p^*) \not \in \Pi^5_\infty(\mathcal{E}_\infty) $.
\end{example}
 It remains unknown  whether there exist any simple
 criteria to determine whether the inverse M\"{o}bius transform of any point in $\mathcal{D}_n$ is also in
 $\Pi^m_\infty(\mathcal{E}_\infty) $.

 \paragraph{Connections with \cite{lauritzen:rinaldo:sadeghi:18}.} In \cite{lauritzen:rinaldo:sadeghi:18}, we have also investigated exchangeable network models as
 graphical models on binary data with symmetric restrictions. There we have shown that
 distributions in $\mathcal{E}_n$ can only be compatible with
 few Markov properties, and we have identified all the possible conditional independence
 structures that such distributions  may exhibit. Furthermore, we have
 proved that the only
 non-trivial conditional independence structure that yields  a consistent
 sequence of
 finitely exchangeable probability distributions corresponds to
a certain
 bi-directed graphical model for marginal independence.
 Such a model, which can be thought of as a canonical parametric model
 encompassing all
 finitely exchangeable networks of any given size,
 belongs to the class of marginal models for binary data studied by
 \cite{drt08} \citep[see also][]{roverato:etal:13}. In particular, it is
 obtained  from enforcing  the dissociatedness constraints
 \eqref{eq:dissociatedness.n} in addition to  exchangeability.
 One of the implications
 of these results is that the image under $\Pi^m_{\infty}$ of all extremal families in
 $\mathcal{E}_\infty$ is a strict  submodel of a graphical model for marginal
 independence.
 Finally, the model can be parameterized as a curved exponential family on $\mathcal{L}_n$ with natural sufficient
 statistics given by the injective density homomorphisms and dimension equal to
 the number of connected unlabeled graphs on $n$ nodes.

\subsection{Maximum likelihood estimation}

In this section we further investigate some of the statistical properties of the
models specified by the manifold $\mathcal{D}_n$ of exchangeable and dissociated
distributions.
 We will focus on the basic problem of estimating the M\"{o}bius
parameters by maximizing the likelihood based on a sample of size one.

If $G \in \mathcal{L}_n$ is the observed network, a maximum
likelihood estimator (MLE) of the M\"{o}bius parameters under the dissociated model is
a point in
the set
\[
    \mathrm{argmax}_{ z \in \mathcal{D}_n  } \ell(G,z),
\]
where $\ell(\cdot,G)$ is the likelihood function, given by
\[
     z \in \mathcal{D}_n \mapsto \ell(z,G) = \sum_{ \{ F \in \mathcal{I}_n, \colon G
    \subseteq F\} } (-1)^{|F \setminus G
    |} z(F).
\]
Using exchangeability, we can rewrite the likelihood function as
\[
   \ell(z,G)  =  \sum_{ U \in \mathcal{U}_n } (-1)^{E(U) - E(G)
   } r_{U}(G)z(U),
\]
where for a (labeled or unlabeled) graph $G$, $E(G)$ is the number of its edges, $r_{U}(G)$ is the number of graphs in $\mathcal{L}_n$
containing $G$ as a subgraph and belonging to the isomorphism class represented
by $U$, and $z(U)$ is  the common value of the coordinates of the M\"{o}bius
parameters $z$ corresponding to the graphs in the isomorphism class represented
by $U$. See Examples 1 and 2 in \cite{lauritzen:rinaldo:sadeghi:18}.

As remarked in the previous Section, points in $\mathcal{D}_n$ correspond to the closure of
the mean-value space of a curved exponential family of probability
distributions on $\mathcal{L}_n$}.
The MLE  of the M\"{o}bius parameters may be on the boundary of
$\mathcal{D}_n$ and may
not be unique. Both cases are problematic from a statistical standpoint: the
former case implies that the probability
distribution corresponding to the MLE assigns zero probability to
some graphs in
$\mathcal{L}_n$ (when in fact all probabilities should be positive) and
the latter case renders statistical inference based on such an estimator ill-posed.

In order to study both issues, we have obtained
numerically all the possible
maximum likelihood estimates under the constraints of exchangeability and
dissociatedness for all the realizations  of one network on four nodes.
We have carried out the calculations in {\tt Mathematica} using the built in
optimization method. \cite{drt08} propose a general algorithm for computing the MLE of the M\"{o}bius
    parameters of marginal models for binary data that could
in principle be used for our problem. While such algorithm is more
efficient and presumably faster than the brute force optimization, it requires strictly positive counts, a
condition that is never satisfied when the data take the form of a single observed
network.

When $n = 4$, there are $11$ isomorphism classes,
shown below in Figure \ref{fig:4nodes} as unlabeled graphs, along with their respective sizes.

\begin{figure}[!htb]
\centering
\begin{tikzpicture}[node distance = 5mm and 5mm, minimum width = 1mm]
    \begin{scope}
      \tikzstyle{every node} = [shape = circle,
      font = \small,
      minimum height = 1mm,
      inner sep = 2pt,
      draw = black,
      fill = black,
      anchor = center],
      text centered]
      \node(i) at (0,0) {};
      \node(j) [right = of i] {};
      \node(l) [above = of j] {};
			\node(k) [above = of i] {};
      \node(i1) [above = 15mm of i] {};
      \node(j1) [right = of i1] {};
      \node(l1) [above = of j1] {};
			\node(k1) [above = of i1] {};
			
			\node(i2) [right = 15mm of i] {};
      \node(j2) [right = of i2] {};
      \node(l2) [above = of j2] {};
			\node(k2) [above = of i2] {};
			
			\node(i3) [above = 15mm of i2] {};
      \node(j3) [right = of i3] {};
      \node(l3) [above = of j3] {};
			\node(k3) [above = of i3] {};
			
			\node(i4) [left = 15mm of i] {};
      \node(j4) [right = of i4] {};
      \node(l4) [above = of j4] {};
			\node(k4) [above = of i4] {};
			
			\node(i5) [above = 15mm of i4] {};
      \node(j5) [right = of i5] {};
      \node(l5) [above = of j5] {};
			\node(k5) [above = of i5] {};

			\node(i6) [right = 15mm of i2] {};
      \node(j6) [right = of i6] {};
      \node(l6) [above = of j6] {};
			\node(k6) [above = of i6] {};

			\node(i7) [above = 15mm of i6] {};
      \node(j7) [right = of i7] {};
      \node(l7) [above = of j7] {};
			\node(k7) [above = of i7] {};

			\node(i8) [left = 15mm of i4] {};
      \node(j8) [right = of i8] {};
      \node(l8) [above = of j8] {};
			\node(k8) [above = of i8] {};

			\node(i9) [above = 15mm of i8] {};
      \node(j9) [right = of i9] {};
      \node(l9) [above = of j9] {};
			\node(k9) [above = of i9] {};

			\node(i10) [right = 15mm of i7] {};
      \node(j10) [right = of i10] {};
      \node(l10) [above = of j10] {};
			\node(k10) [above = of i10] {};		
    \end{scope}
		
    \begin{scope}
    \tikzstyle{every node} = [node distance = 4mm and 4mm, minimum width = 1mm,
    font= \footnotesize,
      anchor = center,
      text centered]
\node(a) [below left = 1mm and 0mm of j]{$\times 12$};
\node(b) [below left = 1mm and 0mm of j1]{$\times 12$};
\node(c) [below left = 1mm and 0mm of j2]{$\times 6$};
\node(d) [below left = 1mm and 0mm of j3]{$\times 3$};
\node(e) [below left = 1mm and 0mm of j4]{$\times 3$};
\node(f) [below left = 1mm and 0mm of j5]{$\times 6$};
\node(g) [below left = 1mm and 0mm of j6]{$\times 1$};
\node(h) [below left = 1mm and 0mm of j7]{$\times 4$};
\node(p) [below left = 1mm and 0mm of j8]{$\times 4$};
\node(q) [below left = 1mm and 0mm of j9]{$\times 1$};
\node(r) [below left = 1mm and 0mm of j10]{$\times 12$};
\end{scope}
    \begin{scope}
      \draw (i) -- (j);
			\draw (i) -- (k);
			\draw (j) -- (k);
			\draw (l) -- (k);
			
			\draw (i1) -- (j1);
			\draw (i1) -- (k1);
			
			\draw (i2) -- (j2);
			\draw (i2) -- (k2);
			\draw (j2) -- (l2);
			\draw (l2) -- (k2);
			\draw (j2) -- (k2);
			
			\draw (i3) -- (j3);
			\draw (l3) -- (k3);
			
			\draw (i4) -- (j4);
			\draw (i4) -- (k4);
			\draw (j4) -- (l4);
			\draw (l4) -- (k4);
			
			\draw (i5) -- (j5);

			\draw (i7) -- (j7);
			\draw (i7) -- (k7);
			\draw (k7) -- (j7);

			\draw (i10) -- (j10);
			\draw (i10) -- (k10);
			\draw (k10) -- (l10);

			\draw (i8) -- (j8);
			\draw (i8) -- (k8);
			\draw (i8) -- (l8);

			\draw (i6) -- (j6);
			\draw (i6) -- (k6);
			\draw (j6) -- (l6);
			\draw (l6) -- (k6);
			\draw (j6) -- (k6);
			\draw (i6) -- (l6);
    \end{scope}

    \end{tikzpicture}
		\caption{{\footnotesize All non-isomorphic graphs $U \in
		    \mathcal{U}_4$, along with the size $m$ of the isomorphism
		    class each represents, denoted by
		$\times m$.}\label{fig:4nodes}}
		\end{figure}
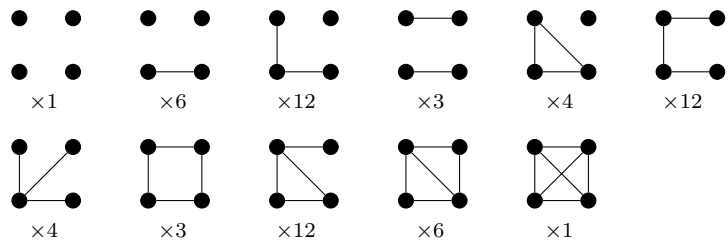

Table \ref{tab:zu} and \ref{tab:pu} show the maximum likelihood estimates of the
M\"{o}bius parameter and of the actual probabilities, respectively. An empty
entry in the table signifies a value of zero.
It is apparent from Table \ref{tab:pu} that  all the estimates
contain zero coordinates, a fact that  implies that, with only one observed
network, all the maximum likelihood estimates lie on the boundary of the parameter
space. Furthermore, the MLE is not unique: it can be seen from Tables
\ref{tab:zu} and \ref{tab:pu} that when the
observed network consists of two parallel edges, or is a path or a cycle then the likelihood is maximized along
line
segments on the boundary of both the simplex and the M\"{o}bius simplex. (See
also Example 7 in \cite{lauritzen:rinaldo:sadeghi:18}).
Finally, direct calculations reveal that, with the exception of
    the point masses at the empty and complete graphs, none of the maximum likelihood estimates
of the probability distributions extend to exchangeable distributions over larger
networks.

The fact that there
are zeros in the MLEs of \Cref{tab:pu} means that it is relatively easy to check, for each
case, that no exchangeable distribution on $5$-node graphs can marginalize to
that MLE. Consider for example the second row, corresponding to observing a
$4$-node graph with only one edge. The MLE is a mixture of a point mass at the
complete graph and of the uniform distribution over graphs isomorphic to the
observed one. In order for the MLE to be the marginal of some exchangeable
distribution on $5$-node graphs, that distribution must in turn be a mixture
of uniform distributions over isomorphic $5$-node graphs (and a point mass on the
complete graph on $5$ nodes) such that the removal of any one node will either
be a $4$-node graph with one edge or a complete graph. Such distribution does
not exist (because there does not exist any $5$-node unlabeled graph such that
removing any one node will produce as an induced subgraph a $4$-node graph with only
one edge). Other cases can be checked by similar arguments.

\begin{landscape}
\begin{table}
\caption{\label{tab:zu}\small Maximum likelihood estimates of the M\"{o}bius parameters
 for all possible samples of size
one from $\mathcal{L}_{4}$.  Along the rows of
the table we report only the isomorphic classes in
$\mathcal{G}_{4}$, each represented as an undirected graph in $\mathcal{U}_4$. Indeed, by
    exchangeability, isomorphic graphs in $\mathcal{G}_4$ yield the
same maximum likelihood estimate. The columns of the
table are indexed by all $U \in \mathcal{U}_4$ without isolated nodes. Thus the entry $(U',U)$
represents the maximum likelihood estimate of $z(F)$, where $U = [F]$ if the observed network is
a graph in the class represented by $U'$.
 An empty cell is equivalent to $0$. In addition, $-(3\sqrt{2}/4)+1\leq b,d\leq (3\sqrt{2}/4)-1$ and $-1/32\leq c\leq 1/32$.}
\centering
	\fbox{%
	\begin{tabular}{|c|c|c|c|c|c|c|c|c|c|c|}
  \hline
  $x\backslash U$  & $\vcenter{\hbox{\includegraphics[scale=0.05]{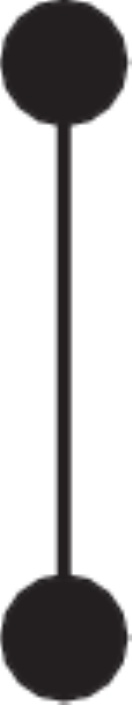}}}$ & $\vcenter{\hbox{\includegraphics[scale=0.05]{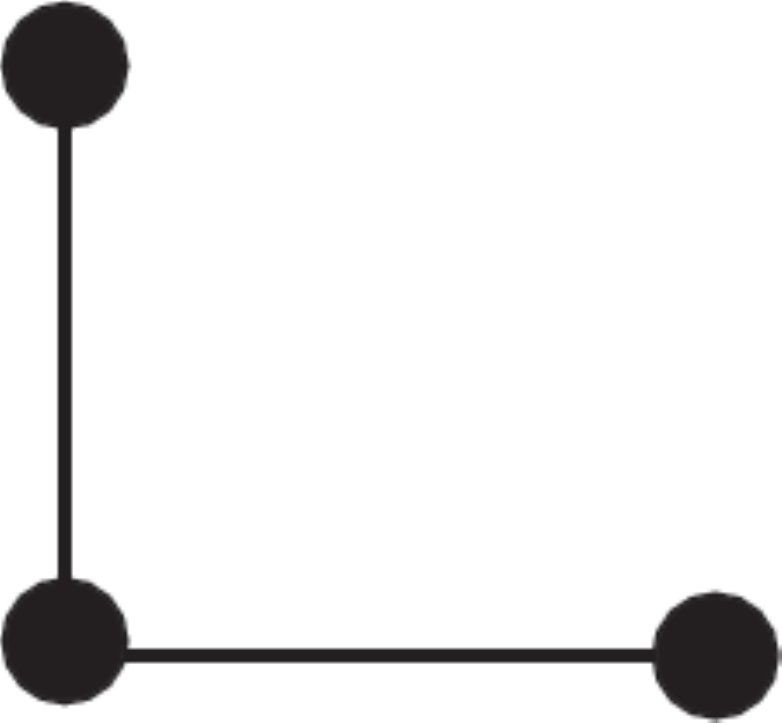}}}$ & $\vcenter{\hbox{\includegraphics[scale=0.05]{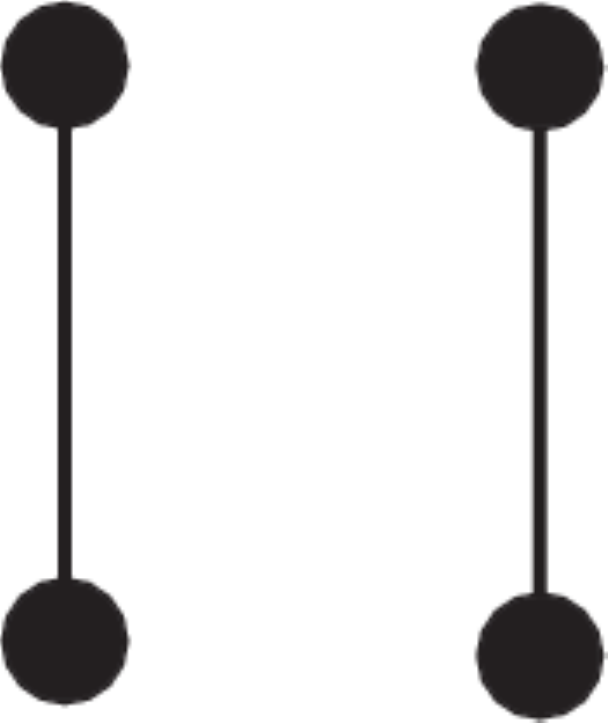}}}$ & $\vcenter{\hbox{\includegraphics[scale=0.05]{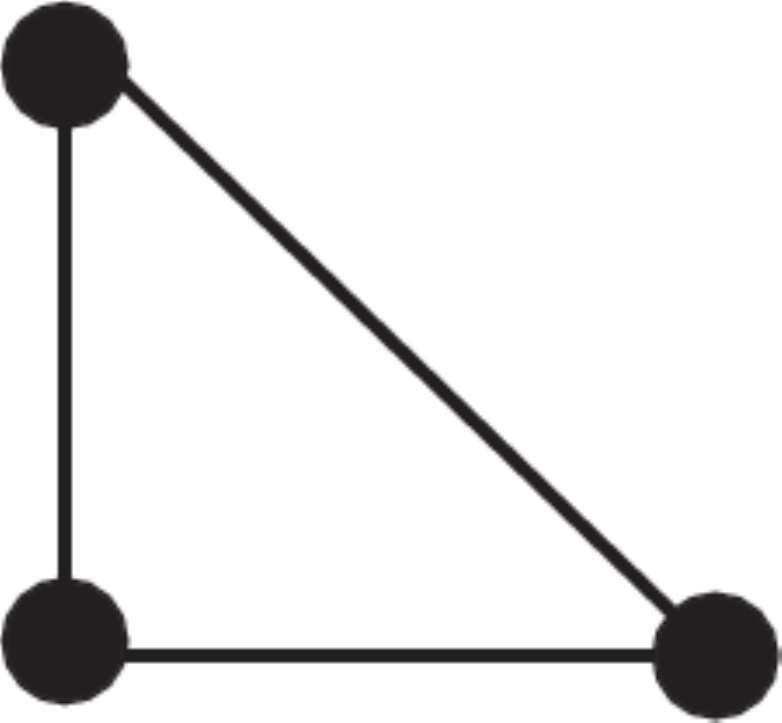}}}$ & $\vcenter{\hbox{\includegraphics[scale=0.05]{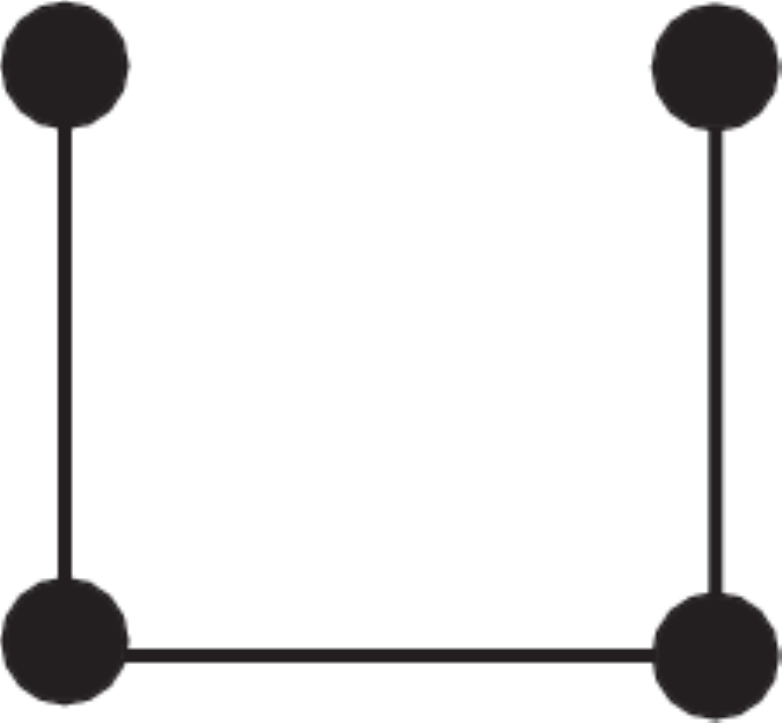}}}$ & $\vcenter{\hbox{\includegraphics[scale=0.05]{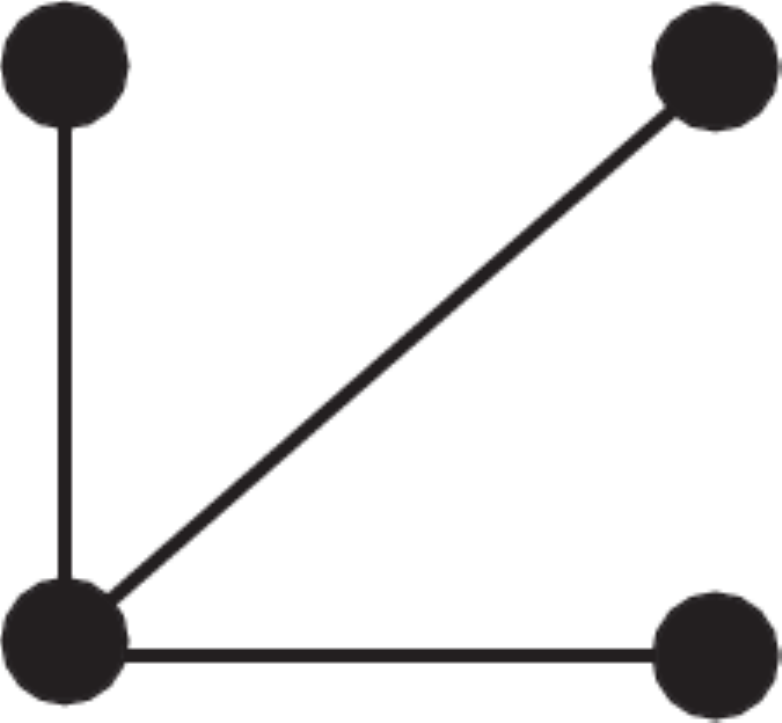}}}$ & $\vcenter{\hbox{\includegraphics[scale=0.05]{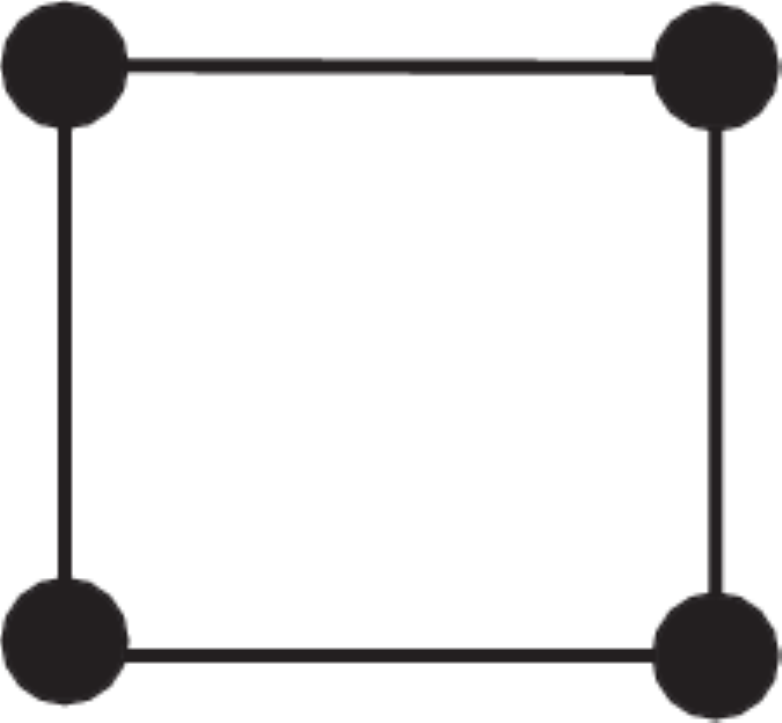}}}$ & $\vcenter{\hbox{\includegraphics[scale=0.05]{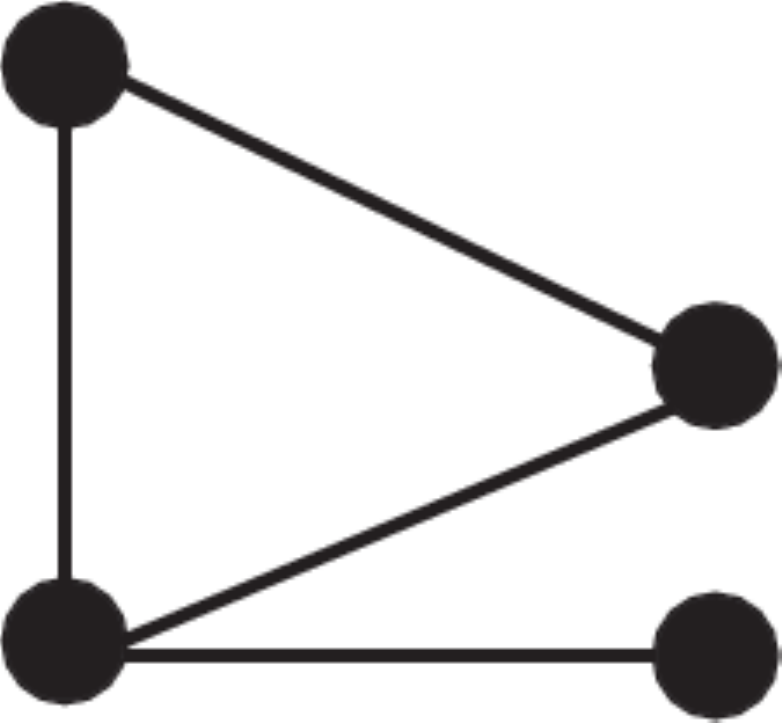}}}$ & $\vcenter{\hbox{\includegraphics[scale=0.05]{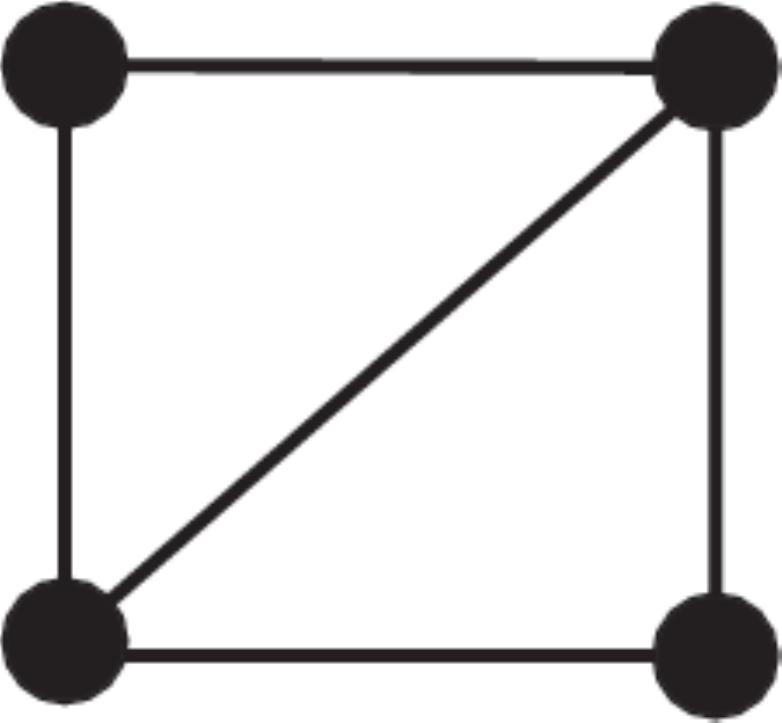}}}$  & $\vcenter{\hbox{\includegraphics[scale=0.05]{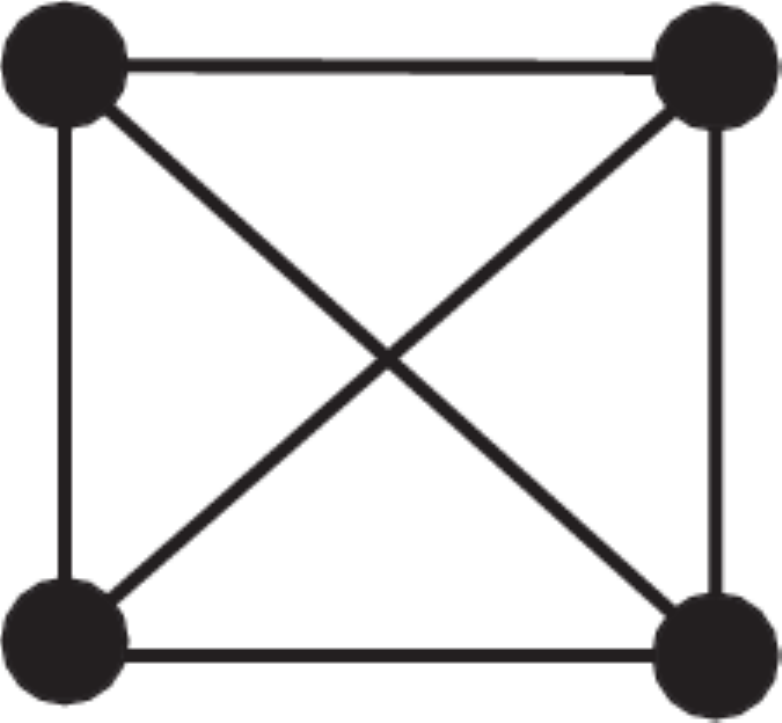}}}$ \\ \hline 
  $\vcenter{\hbox{\includegraphics[scale=0.05]{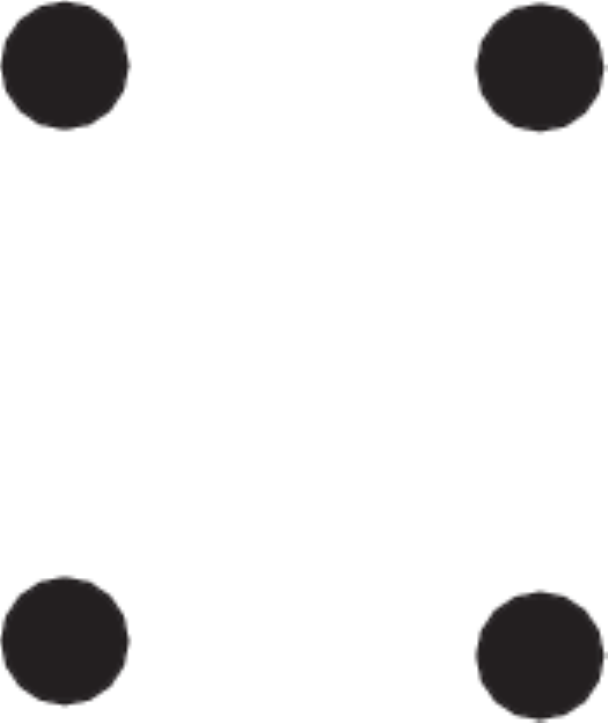}}}$  &  &  &  &  &  &  &  &  &  &  \\
  $\vcenter{\hbox{\includegraphics[scale=0.05]{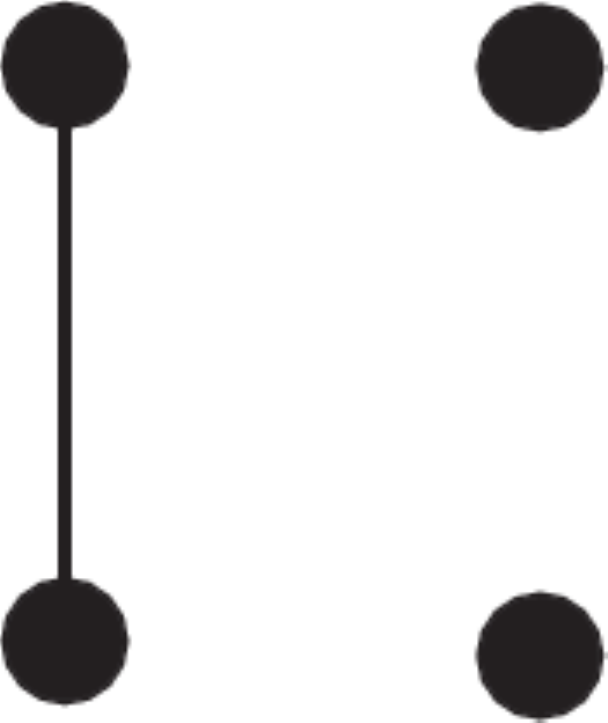}}}$  & $1/5$ & $1/25$ & $1/25$ & $1/25$ & $1/25$ & $1/25$ & $1/25$ & $1/25$ & $1/25$ & $1/25$ \\
  $\vcenter{\hbox{\includegraphics[scale=0.05]{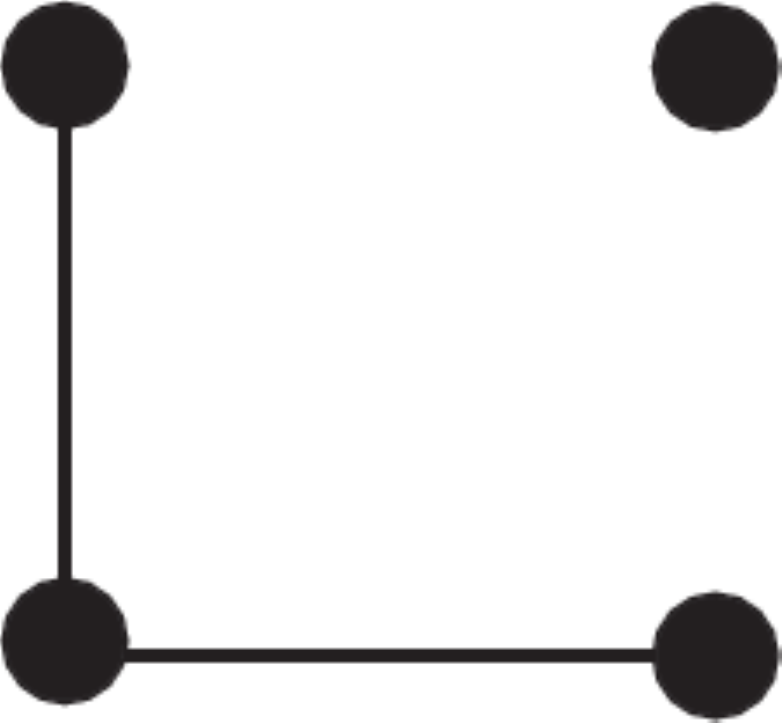}}}$  & $1/2$ & $5/16$ & $1/4$ & $1/4$ & $1/4$ & $1/4$ & $1/4$ & $1/4$ & $1/4$ & $1/4$ \\
  $\vcenter{\hbox{\includegraphics[scale=0.05]{figureex2-3.pdf}}}$  &  $\sqrt{2}-1$ & $(3\sqrt{2}/2)-2$ & $3-2\sqrt{2}$ & $(3\sqrt{2}/4)-1-b$ &  & $(3\sqrt{2}/4)-1+b$ &  &  &  &  \\
  $\vcenter{\hbox{\includegraphics[scale=0.05]{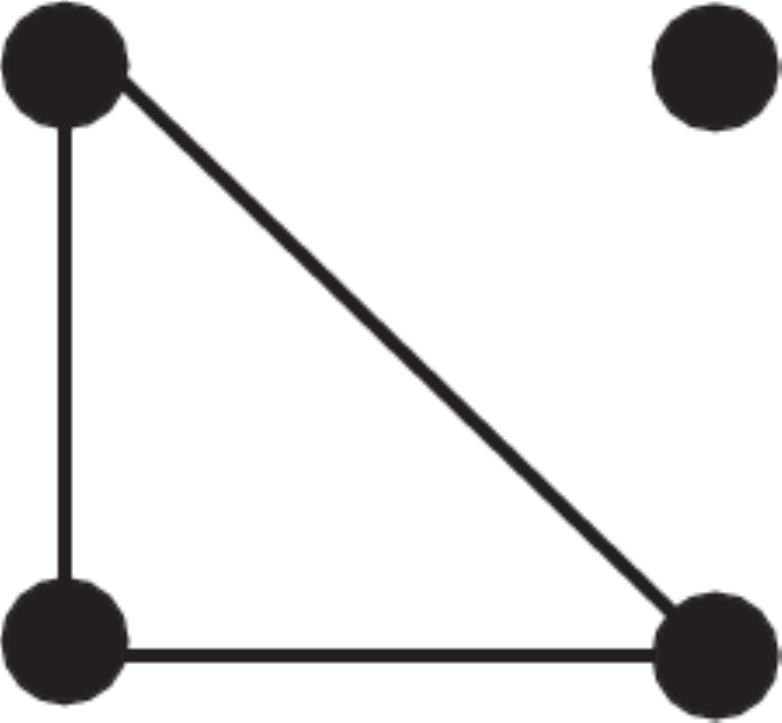}}}$  & $1/2$ & $1/4$ & $1/4$ & $1/8$ & $1/8$ &  & $1/8$ &  &  &  \\
  $\vcenter{\hbox{\includegraphics[scale=0.05]{figureex2-6.pdf}}}$  & $1/2$ & $3/16$ & $1/4$ & $1/32-c$ & $1/16$ & $1/32+c$ &  &  &  &  \\
  $\vcenter{\hbox{\includegraphics[scale=0.05]{figureex2-5.pdf}}}$  & $1/2$ & $1/4$ & $1/4$ &  & $1/8$ & $1/8$ & $1/8$ &  &  &  \\
  $\vcenter{\hbox{\includegraphics[scale=0.05]{figureex6-1.pdf}}}$  & $2-\sqrt{2}$ & $1-\sqrt{2}/2$ & $6-4\sqrt{2}$ & $(3\sqrt{2}/4)-1-d$ & $3-2\sqrt{2}$ & $(3\sqrt{2}/4)-1+d$ & $3-2\sqrt{2}$ &  &  &  \\
  $\vcenter{\hbox{\includegraphics[scale=0.05]{figureex2-7.pdf}}}$  & $1/2$ & $5/16$ & $1/4$ & $3/16$ & $1/8$ & $3/16$ &  & $1/16$ &  &  \\
  $\vcenter{\hbox{\includegraphics[scale=0.05]{figureex2-8.pdf}}}$   & $4/5$ & $16/25$ & $16/25$ & $12/25$ & $12/25$ & $12/25$ & $8/25$ & $8/25$ & $4/25$ &  \\
  $\vcenter{\hbox{\includegraphics[scale=0.05]{figureex2-9.pdf}}}$  & $1$ & $1$ & $1$ & $1$ & $1$ & $1$ & $1$ & $1$ & $1$ & $1$ \\
  \hline
\end{tabular}}

\end{table}

\begin{table}
\caption{\label{tab:pu} \small Maximum likelihood estimates of the probability
parameters for all possible samples of size
one from $\mathcal{L}_{4}$.
Along the rows and columns of
the Table we report only the isomorphic classes in
$\mathcal{G}_{4}$, each represented as an undirected graph in $\mathcal{U}_4$.
Thus, the entry corresponding to the pair $(U',U)$ is the
maximum likelihood estimate of the probability of observing a graph in the
isomorphism class represented by $U$  if the observed network is a graph in the class
represented by $U'$.
 An empty cell is equivalent to $0$. In addition, $-(3\sqrt{2}/4)+1\leq b,d\leq (3\sqrt{2}/4)-1$ and $-1/32\leq c\leq 1/32$.}
\centering
\fbox{%
\begin{tabular}{|c|c|c|c|c|c|c|c|c|c|c|c|}
  \hline
    {$x\backslash U$} & $\vcenter{\hbox{\includegraphics[scale=0.05]{figureex2-04.pdf}}}$ & $\vcenter{\hbox{\includegraphics[scale=0.05]{figureex2-14.pdf}}}$ & $\vcenter{\hbox{\includegraphics[scale=0.05]{figureex2-24.pdf}}}$ & $\vcenter{\hbox{\includegraphics[scale=0.05]{figureex2-3.pdf}}}$ & $\vcenter{\hbox{\includegraphics[scale=0.05]{figureex2-44.pdf}}}$ & $\vcenter{\hbox{\includegraphics[scale=0.05]{figureex2-6.pdf}}}$ & $\vcenter{\hbox{\includegraphics[scale=0.05]{figureex2-5.pdf}}}$ & $\vcenter{\hbox{\includegraphics[scale=0.05]{figureex6-1.pdf}}}$ & $\vcenter{\hbox{\includegraphics[scale=0.05]{figureex2-7.pdf}}}$ & $\vcenter{\hbox{\includegraphics[scale=0.05]{figureex2-8.pdf}}}$  & $\vcenter{\hbox{\includegraphics[scale=0.05]{figureex2-9.pdf}}}$ \\ \hline
  $\vcenter{\hbox{\includegraphics[scale=0.05]{figureex2-04.pdf}}}$  & $1$ &  &  &  &  &  &  &  &  &  &  \\
  $\vcenter{\hbox{\includegraphics[scale=0.05]{figureex2-14.pdf}}}$  & & $4/25$ &  &  &  &  &  &  &  &  & $1/25$ \\
  $\vcenter{\hbox{\includegraphics[scale=0.05]{figureex2-24.pdf}}}$  & &  & $1/16$ &  &  &  &  &  &  &  & $1/4$ \\
  $\vcenter{\hbox{\includegraphics[scale=0.05]{figureex2-3.pdf}}}$  & &  &  &
  $3-2\sqrt{2}$ &
  $(3\sqrt{2}/4)-1-b$ & &
  $(3\sqrt{2}/1)-1+b$  &  &  &  &  \\
  $\vcenter{\hbox{\includegraphics[scale=0.05]{figureex2-44.pdf}}}$  & $1/8$ &  &  &  & $1/8$ &  &  & $1/8$ &  &  &  \\
  $\vcenter{\hbox{\includegraphics[scale=0.05]{figureex2-6.pdf}}}$  & &  &  &  & $1/32-c$ & $1/16$ & $1/32+c$ &  &  &  &  \\
  $\vcenter{\hbox{\includegraphics[scale=0.05]{figureex2-5.pdf}}}$  & $1/8$ &  &  &  &  &  & $1/8$ & $1/8$ &  &  &  \\
  $\vcenter{\hbox{\includegraphics[scale=0.05]{figureex6-1.pdf}}}$  & &  &  &  & $(3\sqrt{2}/4)-1-d$  &   & $(3\sqrt{2}/4)-1+d$ & $3-2\sqrt{2}$ &  &  &  \\
  $\vcenter{\hbox{\includegraphics[scale=0.05]{figureex2-7.pdf}}}$  & $1/4$ &  &  &  &  &  &  &  & $1/16$ &  &  \\
  $\vcenter{\hbox{\includegraphics[scale=0.05]{figureex2-8.pdf}}}$   & $1/25$ &  &  &  &  &  &  &  &  & $4/25$ &  \\
  $\vcenter{\hbox{\includegraphics[scale=0.05]{figureex2-9.pdf}}}$  & &  &  &  &  &  &  &  &  &  & $1$ \\
  \hline
\end{tabular}}
\end{table}
\end{landscape}

 \section{Conclusions}

 It is worth commenting on the difference between Theorem
 \ref{thm:main} and \ref{thm:definetti} and
 analogous results for finitely exchangeable and exchangeable random binary sequences; see, e.g.
 \cite{diaconis:77,diaconis:freedman:81,ker06}. First, the $n$-dimensional marginal of any exchangeable binary sequence can
 be described geometrically  as a uniquely determined point in the convex hull of the intersection of the one-dimensional variety
 corresponding to the surface of independence inside the $(2^n - 1)$-dimensional
 simplex
  with the $n$ dimensional affine subset of finitely exchangeable distributions. In our
  setting the manifold $\mathcal{D}_n$ of dissociated distributions (actually,
  as we saw, a certain non-trivial subset of it) plays
  an analogous role, though in the M\"{o}bius parametrization. However, it is clear that
  $\mathcal{D}_n$ is much
 more complex, and, unlike the surface of independence, which has fixed
 dimension $1$, its dimension increases
 with $n$. 
Table \ref{tab:1} provides the dimension of $\mathcal{E}_n$ and of
$\mathcal{D}_n$ (which are the number of unlabeled graphs minus $1$ and the number of connected
unlabeled  graphs, respectively) for all nodes of size
up to $11$. As it can be seen and is also simple to show, the ratio of the dimension of $\mathcal{D}_n$
 over that of $\mathcal{E}_n$ converges to $1$ very rapidly as $n$ grows.  
 The other striking difference is the fact that not all points on the manifold
 of dissociated distributions correspond to extremal exchangeable
 distributions. This is in contrast with the sequence case, in which every
 point
 on the surface of independence corresponds to an extremal exchangeable
 distribution, for each $n$. Thus, exchangeability in graphs (a special
 case of exchangeability for binary
 2-dimensional array) is considerably more subtle and complicated than the
 sequence case.

\begin{table}
\caption{\label{tab:1}{\footnotesize Dimension of the sets $\mathcal{E}_n$ and
$\mathcal{D}_n$   a
	function of the number of nodes $n$. The numbers are sourced from
	OEIS Foundation Inc.
    (2011), The On-Line Encyclopedia of Integer Sequences,
    see {\tt http://oeis.org/A000088} and {\tt http://oeis.org/A001349}}.
	}
	\centering
	\fbox{%
	\begin{tabular}{l|ccccccccc}
	    	\hline \footnotesize
		& \multicolumn{9}{c}{$n$}\\
		\hline
	    	 &  3 &4 &5 &6 &7 &8 &9
	    	&10 & 11  \\
				\hline
				$\mathrm{dim}(\mathcal{E}_n)$ & 3 & 10 & 33 & 155 & 1,043 & 12,345  & 274,667 & 12,005,167 & 1,018,997,863 \\
 $\mathrm{dim}(\mathcal{D}_n)$ & 3 & 9 & 30 & 142 & 995 & 12,112 & 273,192 & 11,989,763 & 1,018,690,328 \\
\hline
	    \end{tabular}}
	\end{table}

	\section*{Acknowledgments}
	The authors have benefited from precise and constructive comments from two anonymous referees. 
Alessandro Rinaldo and Kayvan Sadeghi were partially supported by AFOSR grant
FA9550-14-1-014.

\bibliography{jasbib}

\end{document}